\patchcmd{\thebibliography}{\section*{\refname}}{}{}{}
\numberwithin{equation}{section}
\newtheorem{lemma}{Lemma}[section]
\newtheorem{theorem}[lemma]{Theorem}
\newtheorem{remark}[lemma]{Remark}
\newtheorem{proposition}[lemma]{Proposition}
\newcommand{\cH}{\mathcal{H}}
\newcommand{\R}{\mathbb{R}}
\newcommand{\Z}{\mathbb{Z}}
\newcommand{\cG}{\mathcal{G}}
\newcommand{\W}{\mathcal{W}}
\newcommand{\cS}{\mathcal{S}}
\def\rd{\R^d}
\def\rdd{{\R^{2d}}}
\def\lrd{L^2(\rd)}
\newcommand{\modsp}{modulation space}
\newcommand{\tfa}{time-frequency analysis}
\newcommand{\stft}{short-time Fourier transform}
\newcommand{\tf}{time-frequency}
\newcommand{\tfs}{time-frequency shift}
\newcommand{\psdo}{pseudodifferential operator}
\newcommand{\ud}{\,\mathrm{d}}
\newcommand{\norm}[2]{\left\| #2 \right\|_{#1}}
\definecolor{darkviolet}{rgb}{0.58,0,0.83}
\subjclass[2020]{47G30,42C40,47A10,47L80,35S05}
\date{}
\begin{document}

\title[H\"older-Continuity of Spectra]{H\"older-Continuity of Extreme Spectral Values of Pseudodifferential Operators, Gabor Frame Bounds, and Saturation}

\author[K. Gr\"ochenig]{Karlheinz Gr\"ochenig}
\address[K. G.]{Faculty of Mathematics, 
University of Vienna, 
Oskar-Morgenstern-Platz 1, 
A-1090 Vienna, Austria}

\author[J. L. Romero]{Jos\'e Luis Romero}
\address[J. L. R.]{Faculty of Mathematics,
	University of Vienna,
	Oskar-Morgenstern-Platz 1,	1090 Vienna, Austria, and
	Acoustics Research Institute, Austrian Academy of Sciences, Dominikanerbastei 16, 1010 Vienna,  Austria}
\email{jose.luis.romero@univie.ac.at}

\author[M. Speckbacher]{Michael Speckbacher}
\address[M. S.]{Acoustics Research Institute, Austrian Academy of Sciences,
        Dominikanerbastei 16, 1010 Vienna,  Austria}
\email{michael.speckbacher@oeaw.ac.at}

\dedicatory{Dedicated to Akram Aldroubi on the occasion of his 65th birthday}

\thanks{This research was funded by the Austrian Science Fund (FWF) 10.55776/Y1199. 
For open access purposes, the authors have applied a CC BY public copyright license to any author-accepted manuscript version arising from this submission.}

\begin{abstract}
We build on our recent results on the Lipschitz dependence
  of the extreme spectral  values of one-parameter families of
  pseudodifferential operators with symbols in a weighted Sj\"ostrand class. We prove
  that   larger symbol classes  lead to H\"older continuity with
  respect to the parameter. This result is then used to investigate
  the behavior of frame bounds of families of Gabor systems
  $\mathcal{G}(g,\alpha\Lambda)$ with respect to the parameter
  $\alpha>0$, where $\Lambda$ is a set of non-uniform, relatively
  separated time-frequency shifts,  and $g\in M^1_s(\R^d)$, $0\leq
  s\leq 2$. In particular, we show that the frame bounds depend continuously on $\alpha$ if $g\in M^1(\R^d)$, and are
  H\"older continuous if $g\in M^1_s(\R^d)$, $0<s\leq 2$, with the
  H\"older exponent explicitly given.
\end{abstract} 

\maketitle

\section{Introduction}

The \emph{extreme spectral values} of a bounded self-adjoint operator $T$ are given by $\sigma_+(T)=\max\{\lambda\in\R:\ \lambda\in\sigma(T)\}$ and $\sigma_-(T)=\min\{\lambda\in\R:\ \lambda\in\sigma(T)\}$, where $\sigma(T)$ denotes the spectrum of $T$.
In \cite{grorosp23}, we studied certain one-parameter families of
pseudodifferential operators and the Lipschitz continuity of their
extreme spectral values. While small perturbations in operator norm result in small changes in spectra, the one-parameter families treated in \cite{grorosp23}
include non-norm-continuous operations, such as
dilation of Weyl symbols, and are therefore a veritable deformation theory of \psdo s.

Our work in \cite{grorosp23} was motivated by Bellissard's seminal results \cite{bel94} on the 
almost-Mathieu operator in a non-commutative torus, and extended them to families of operators with possibly non-periodic Weyl symbols by systematically exploiting \tf\ methods and the theory of modulations spaces. See also \cite{copu24} for closely related results.

While many other contributions in the wake of Bellissard focus on spectral problems in mathematical physics, in particular on magnetic \psdo s ~\cite{atmapu10, bebe16, bebeni18, co10, copu12, copu15, cohepu21, ko03, beta21},
our motivation was to understand the behavior of the frame bounds of Gabor frames under a dilation of the underlying lattice.

In this paper, we extend our results and investigate the relation between the smoothness of a symbol class and the H\"older continuity of the extreme spectral values of a \psdo\ in more detail.

To formulate our main results, let us recall the main objects. 
Let $z=(x,\omega)\in\R^{2d}$. The \emph{short-time Fourier transform}
of $f$ with respect to a  window $g$ is given by 
$$
V_gf(z)=\langle f,\pi(z)g\rangle=\int_{\R^d}f(t)\overline{g(t-x)}e^{-2\pi i \omega\cdot t}\ud t,
$$
where $\pi (z)$ is the \tfs \ $\pi(z)g(t)= e^{2\pi i\omega\cdot t}
g(t-x)$.

We let $\varphi(t)=2^{d/4}e^{-\pi |t|^2}$ denote the standard Gaussian in $\R^d$ and use it as the canonical window for the \stft .
 We now define the \emph{mixed-norm weighted modulation space}
 $M^{p,q}_{s,t}(\R^d)$,  $1\leq p,q\leq\infty$, $s,t\in\R,$ as the space of 
 all tempered distributions $f$  in $\mathcal{S}'(\R^d)$ for which  the norm 
\begin{equation}\label{eq:def-mod-sp}
\| f\|_{M^{p,q}_{s,t}}:=\left(\int_{\R^{d}}\left(\int_{\R^{d}}|V_\varphi f(x,\omega)| ^p(1+|x |)^{sp}  \ud x\right)^{q/p} (1+|\omega |)^{tq}\ud\omega \right)^{1/q}
\end{equation}
 is finite,  with the usual modification when $p=\infty$ or $ q=\infty$. If $p = q$, we write
$M^p_{
s,t}(\R^d)$, and if $s = t$, we write $M^{p,q}_s (\R^d)$. \footnote{Warning: while in our notation $M^p_s$ means $M^{p,p}_{s,s}$ it is also common to let $M^p_s$ denote the modulation space defined with respect to the radial weight $(1+|x|+|\omega|)^{s}$.}
Using any nonzero function $g\in \mathcal{S}(\R^d)$ instead of the Gaussian in \eqref{eq:def-mod-sp} gives an equivalent norm, i.e. $\| f\|_{M^{p,q}_{s,t}}\asymp \|V_gf\|_{L^{p,q}_{s,t}}$, with implied constants depending on $s$ and $t$. 

\vspace{3mm}

\noindent \textbf{Pseudodifferential operators.} The Weyl transform
of  a symbol $\sigma\in \mathcal{S}'(\R^{2d})$ is the operator 
\begin{equation}
\sigma^w
f(y)=\int_{\R^{2d}}\sigma\left(\frac{x+y}{2},\omega\right)e^{2\pi i
  (y-x)\cdot \omega}f(x)\ud x\ud\omega  
\end{equation}
for $f\in \cS (\rd )$,  and  a suitable interpretation of the integral. 
Let $D_a $ denote the dilation  $D_a \sigma (z) = \sigma (az)$. 
We study spectral properties of    one-parameter families of \psdo s
with associated symbols 
\begin{equation}
  \label{eq:c1}
\sigma_\delta=D_{\sqrt{1+\delta}}G_\delta,\quad \delta\in(-\delta_0,\delta_0),  
\end{equation}
and  write $T_\delta:=\sigma_\delta^w$ for the corresponding
operators. For such symbols we regard the mapping $\delta \mapsto
G_\delta $ as a small and norm-continuous perturbation of the operator,
whereas the dilation $D_a$  amounts to a large
deformation of the operator that is usually not norm-continuous.

The objective of this article is to  establish conditions that ensure
that the  extreme spectral values of $T_\delta$ are H\"older
continuous with respect to the parameter $\delta$. Our main result is the following extension of \cite[Theorem~1.1]{grorosp23} which established the statement in the special case when $s=2$.
\begin{theorem}\label{lip}
Let $0<\delta_0<1$, and $0< s\leq 2$. For $\delta \in (-\delta_0,\delta_0)$, let
$G_\delta \in M^{\infty,1}_{0,s}(\R^{2d})$ be real-valued and $\delta\mapsto G_\delta  $ be weakly differentiable\footnote{This means 
 there exists $\partial_\delta G_\delta\in\mathcal{S}'(\R^{2d})$ such that $ \frac{d}{d\delta}\langle G_\delta, F\rangle= \langle \partial_\delta G_\delta, F\rangle$ for every $F \in\mathcal{S}(\R^{2d})$. If $s=2$, then this formulation can be interpreted pointwise, i.e. that the  partial derivative of $(z,\delta)\mapsto G_\delta(z)$ with respect to $\delta$ exists for almost every $z\in\R^{2d}$.} such  that  $\partial_\delta
{G_\delta}\in M^{\infty,1}_{0,s-2}(\R^{2d})$.  Let $T_\delta = \sigma _\delta ^w $
be the family of pseudodifferential operators with Weyl symbols $\sigma _\delta =
D_{\sqrt{1+\delta }} G_\delta $.   
Then, for
$\delta_1,\delta_2 \in (-\delta _0, \delta _0)$,
\begin{align*}
|\sigma_\pm(T_{\delta_1} )&-\sigma_\pm(T_{\delta_2})|\\ &\leq C_{d,s} \cdot |\delta_1-\delta_2|^{s/2}\cdot (1-\delta_0)^{-(d+1)} \cdot \sup_{|t|<\delta_0} \big( \left\|  G_t\right\|_{M^{\infty,1}_{0,s}} +\left\| \partial_tG_t\right\|_{M^{\infty,1}_{0,s-2}}  \big),
\end{align*}
where $C_{d,s}$ is a constant that only depends on $d$ and $s$.
\end{theorem}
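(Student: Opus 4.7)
The idea is to interpolate between the Lipschitz estimate at $s=2$ supplied by \cite[Theorem~1.1]{grorosp23} and the uniform operator-norm bound $\|T_\delta\|_{L^2\to L^2} \lesssim \|G_\delta\|_{M^{\infty,1}}$ coming from Sj\"ostrand's theorem. The interpolation is implemented by a frequency cutoff at scale $R>0$ in the $\omega$-variable of the symbol, and the H\"older exponent $s/2$ will emerge from optimising $R$ against $|\delta_1-\delta_2|$.

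Concretely, I would expand $G_\delta$ with respect to a Gabor frame $\{\pi(\lambda)\Phi\}_{\lambda\in\Lambda}$ of $L^2(\R^{2d})$ with Schwartz window $\Phi$ and a rectangular lattice aligned with the $(x,\omega)$-splitting of $\R^{2d}$, so that each modulation index factors as $n=(n_1,n_2)$ with $n_2$ coding modulations in the $\omega$-direction. The weighted Sj\"ostrand norm is then equivalent to a sequence norm with weight $(1+|n_2|)^s$ on the Gabor coefficients. Define the low-frequency truncation $G^R_\delta$ by restricting the expansion to $|n_2|\le R$ and set $G^{R,c}_\delta:=G_\delta-G^R_\delta$. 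Since this truncation is a diagonal operation on the coefficient sequence, it commutes with $\partial_\delta$, preserves real-valuedness, and the sequence-norm equivalence yields
\begin{align*}
\|G^R_\delta\|_{M^{\infty,1}_{0,2}} &\lesssim R^{2-s}\,\|G_\delta\|_{M^{\infty,1}_{0,s}}, \\
\|\partial_\delta G^R_\delta\|_{M^{\infty,1}} &\lesssim R^{2-s}\,\|\partial_\delta G_\delta\|_{M^{\infty,1}_{0,s-2}}, \\
\|G^{R,c}_\delta\|_{M^{\infty,1}} &\lesssim R^{-s}\,\|G_\delta\|_{M^{\infty,1}_{0,s}}.
\end{align*}

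Writing $T_\delta = T^R_\delta + T^{R,c}_\delta$ with $T^R_\delta := (D_{\sqrt{1+\delta}}G^R_\delta)^w$, I then apply the $s=2$ theorem of \cite{grorosp23} to the family $(G^R_t)_{t}$ to get
\[
|\sigma_\pm(T^R_{\delta_1})-\sigma_\pm(T^R_{\delta_2})| \lesssim (1-\delta_0)^{-(d+1)}\,R^{2-s}\,|\delta_1-\delta_2|\, M_s,
\]
where $M_s:=\sup_{|t|<\delta_0}\bigl(\|G_t\|_{M^{\infty,1}_{0,s}}+\|\partial_t G_t\|_{M^{\infty,1}_{0,s-2}}\bigr)$. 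For the high-frequency contribution I use the elementary bound $|\sigma_\pm(A+B)-\sigma_\pm(A)|\le \|B\|_{op}$ for self-adjoint operators together with Sj\"ostrand's estimate, absorbing the dilation $D_{\sqrt{1+\delta}}$ into a factor $(1-\delta_0)^{-(d+1)}$ on the Sj\"ostrand-class norm, to control each $\|T^{R,c}_{\delta_i}\|_{op}$ by $(1-\delta_0)^{-(d+1)}R^{-s}M_s$. Combining the two estimates produces a bound of the form $(1-\delta_0)^{-(d+1)}M_s\cdot\bigl(R^{2-s}|\delta_1-\delta_2|+R^{-s}\bigr)$, and the choice $R=|\delta_1-\delta_2|^{-1/2}$ equalises the two terms and delivers the exponent $s/2$.

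The main obstacle will be verifying that the Gabor-coefficient truncation is compatible with the weak differentiability hypothesis, i.e.\ that $\partial_\delta(G^R_\delta)=(\partial_\delta G_\delta)^R$ as tempered distributions with the claimed weighted-norm control; one also has to be slightly careful when $s<1$ and the weight $(1+|n_2|)^{s-2}$ is a true decay, so that the $\ell^{\infty,1}$-sequence characterisation of $M^{\infty,1}_{0,s-2}$ still applies cleanly. Once this bookkeeping is settled, the H\"older exponent follows purely from the elementary optimisation described above.
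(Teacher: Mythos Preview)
Your proposal is correct and follows essentially the same strategy as the paper: approximate $G_\delta$ by a frequency-truncated version in $M^{\infty,1}_{0,2}$, invoke the $s=2$ Lipschitz theorem on the smooth part, control the remainder via the operator-norm bound, and optimise the cutoff scale to obtain the exponent $s/2$. The only cosmetic difference is that the paper implements the truncation via the continuous STFT reproducing formula (setting $H_\delta=\int_{\R^{2d}\times B_R(0)}V_\Phi G_\delta(z)\,\pi(z)\Phi\,dz$) rather than a discrete Gabor frame, which makes the obstacle you flagged---the identity $\partial_\delta(G_\delta)^R=(\partial_\delta G_\delta)^R$---a straightforward application of Leibniz's rule with an integrable majorant.
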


The symbol class $M^{\infty ,1}_{0,s}$ is a weighted \modsp , called a
Sj\"ostrand class in the theory of \psdo s. We may think of $s$ as a
smoothness parameter roughly indicating that $G\in M^{\infty
  ,1}_{0,s}$ is bounded and $s$-H\"older continuous. Theorem~\ref{lip}
says that the extreme spectral bounds of the associated family of
\psdo s is H\"older continuous with exponent $s/2$.  
For $s=2$ and Lipschitz continuity of the extreme spectral values,
Theorem~\ref{lip} was the main result of~\cite{grorosp23}.

One could now move
through that proof line-by-line and make the necessary
modifications. This procedure would be rather long-winded and
technical. We therefore take a different route  and derive
Theorem~\ref{lip}  as a consequence of  the already established Lipschitz continuity
and  a new  approximation argument. The idea is to  approximate the symbol
$G_\delta $ by a smoother symbol $H_\delta $ in the \modsp\ $M^{\infty
  ,1}_{0,2}$, for which we have Lipschitz continuity. We then  quantify the
approximation error of $G_\delta- H_\delta $ in the $M^{\infty,1}$-norm while
simultaneously controlling the $M^{\infty,1}_{0,2}$-norm of the
approximating symbol $H_\delta $.  

\vspace{3mm}

\noindent \textbf{Gabor frames.} Our main motivation to   consider the specific setup of Theorem~\ref{lip} is its application in Gabor analysis. Let $\Lambda \subseteq \rdd $ be a
discrete set (not necessarily a lattice), and consider the set  of
time-frequency shifts $\cG (g,\Lambda ) = \{\pi (\lambda )g\ :  \lambda
\in 
\Lambda \}$ for some window function $g\in \lrd $. A major problem in \emph{Gabor analysis} is to study  when $\cG (g,
\Lambda )$ is a frame, i.e., when there exist constants $A,B>0$ such that
\begin{equation}
  \label{eq:frame}
A\|f\|_2^2 \leq \sum _{\lambda \in \Lambda } |\langle f, \pi (\lambda
)g \rangle |^2 \leq B \|f\|_2^2, \, \qquad  f\in \lrd .
\end{equation}
 In this article, we  consider the case that the set of time-frequency
 shifts is dilated by a parameter  $\alpha>0$ and study the dependence
 of the frame bounds on the parameter.  It is easy to see that 
the optimal constants  $A(\Lambda )$ and $B(\Lambda )$ in
\eqref{eq:frame} 
are  the   smallest and largest spectral value of the frame operator
$S_{g,\Lambda } f = \sum_{\lambda\in \Lambda } \langle f, \pi
(\lambda )g \rangle \pi (\lambda )g$, i.e.,
$$
A(\Lambda)=\sigma_-(S_{g,\Lambda}) \quad \text{and} \quad
B(\Lambda)=\sigma_+(S_{g,\Lambda}) \, .
$$
In the problem of the deformation of Gabor frames one investigates how
the frame bounds of the deformed Gabor family $\cG (g, \alpha \Lambda
)$ depend on the dilation parameter $\alpha $.  Since the frame
operator $S_{g, \alpha \Lambda }$ can be written as a \psdo\ in the
form \eqref{eq:c1}, the behavior of the frame bounds with respect to
$\alpha$ amounts  to  a problem of (H\"older-)continuity of the
extreme spectral  values
$A(\alpha\Lambda)=\sigma_-(S_{g,\alpha\Lambda})$ and
$B(\alpha\Lambda)=\sigma_+(S_{g,\alpha\Lambda})$.  This allows us to  
 apply Theorem~\ref{lip} and derive  conditions on $g$ that
guarantee  certain modes of continuity.

The study of the influence of dilations of time-frequency sets on the
respective frame bounds has a long history. If $\Lambda $ is   a lattice and $g\in M^1(\R^d)$, it was  shown in 
\cite{FK04} that the frame bounds depend \emph{lower semi-continuously}  on
$\alpha $, which implies that the set of lattices that generate a
Gabor frame is an open set.  For general
non-uniform sets $\Lambda$, the lower semi-continuity of the frame
bounds was proven later in \cite{asfeka14}.

As a first result, we  strengthen the results of \cite{FK04,asfeka14}
and show  that for $g\in M^1(\R^d)$ the frame bounds  actually
depend continuously on $\alpha$. 
\begin{theorem}\label{thm:cont}
Let $g\in M^1(\R^d)$ and $\Lambda\subset\R^{2d}$ be relatively separated.
Then the frame bounds of $\mathcal{G}(g,\alpha\Lambda)$ are continuous at $\alpha=1$, that is
\begin{align*}
|\sigma_\pm(S_{g,\Lambda})-\sigma_\pm(S_{g,\alpha\Lambda})|\to 0, \quad \alpha\to 1.
\end{align*}
\end{theorem}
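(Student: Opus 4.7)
The plan is to deduce Theorem~\ref{thm:cont} from Theorem~\ref{lip} by a density/approximation argument. Since $g\in M^1(\R^d)$ only guarantees the Weyl symbol of $S_{g,\alpha\Lambda}$ to lie in $M^{\infty,1}=M^{\infty,1}_{0,0}$, and Theorem~\ref{lip} requires $s>0$, it cannot be applied directly. Instead, I would approximate $g$ by a smoother window for which Theorem~\ref{lip} does apply, and then transfer continuity to $g$ via a uniform operator-norm estimate.

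First I would choose Schwartz functions $g_n\in\cS(\R^d)$ with $g_n\to g$ in $M^1(\R^d)$, using the density of $\cS$ in $M^1$. For each $g_n$, the Weyl symbol of $S_{g_n,\alpha\Lambda}$ equals $\sum_{\lambda\in\Lambda} W(g_n,g_n)(\cdot-\alpha\lambda)$, and after a change of variable can be written in the dilation form \eqref{eq:c1} with a profile of the shape $G_\delta=\sum_\lambda T_\lambda D_{1/\sqrt{1+\delta}} W(g_n,g_n)$ and $\alpha=1/\sqrt{1+\delta}$. Since $g_n\in\cS$ forces $W(g_n,g_n)\in\cS$, both $G_\delta$ and $\partial_\delta G_\delta$ lie in $M^{\infty,1}_{0,2}$ with norms controlled uniformly for $\delta$ in a neighborhood of $0$, so Theorem~\ref{lip} (with $s=2$) yields
\[
|\sigma_\pm(S_{g_n,\alpha\Lambda})-\sigma_\pm(S_{g_n,\Lambda})|\leq C_n\,|\alpha-1|,
\]
meaning that $\alpha\mapsto\sigma_\pm(S_{g_n,\alpha\Lambda})$ is continuous at $\alpha=1$ for each fixed $n$.

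Next I would establish the uniform operator-norm approximation
\[
\sup_{\alpha\in I}\bigl\|S_{g,\alpha\Lambda}-S_{g_n,\alpha\Lambda}\bigr\|_{\mathrm{op}}\xrightarrow[n\to\infty]{}0
\]
on a small neighborhood $I$ of $\alpha=1$. Writing the frame operator as $S_{h,\alpha\Lambda}=C_{h,\alpha\Lambda}^*C_{h,\alpha\Lambda}$ with analysis operator $C_{h,\alpha\Lambda}f=(\langle f,\pi(\alpha\lambda)h\rangle)_{\lambda}$, the identity
\[
S_{g,\alpha\Lambda}-S_{g_n,\alpha\Lambda}=C_{g,\alpha\Lambda}^*\,C_{g-g_n,\alpha\Lambda}+C_{g-g_n,\alpha\Lambda}^*\,C_{g_n,\alpha\Lambda}
\]
reduces the task to the standard Bessel estimate $\|C_{h,\alpha\Lambda}\|_{\mathrm{op}}\leq C_\Lambda\|h\|_{M^1}$, which holds uniformly in $\alpha\in I$ because the relative-separation constant of $\alpha\Lambda$ stays bounded on compact $\alpha$-intervals. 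Combined with $g_n\to g$ in $M^1$ and the fact that $\sigma_\pm$ is $1$-Lipschitz with respect to the operator norm on self-adjoint operators, this yields $\sup_{\alpha\in I}|\sigma_\pm(S_{g,\alpha\Lambda})-\sigma_\pm(S_{g_n,\alpha\Lambda})|\to 0$.

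The theorem then follows from a routine $\varepsilon/3$-argument: choose $n$ large so the uniform approximation error is below $\varepsilon/3$, then shrink the $\alpha$-neighborhood so that $|\sigma_\pm(S_{g_n,\alpha\Lambda})-\sigma_\pm(S_{g_n,\Lambda})|<\varepsilon/3$. The main obstacle I anticipate is the uniform-in-$\alpha$ Bessel bound on relatively separated sets in $M^1$: this is classical but needs to be stated with constants depending only on a uniform relative-separation parameter, so that shrinking $\alpha$ around $1$ does not degrade the estimate. Once this is in place, the combination of Lipschitz continuity for smooth windows with $M^1$-density delivers the continuity claim.
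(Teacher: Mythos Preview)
Your proposal is correct and follows essentially the same approach as the paper: approximate $g$ in $M^1$ by a smoother window (the paper picks $h\in M^1_1$ rather than a Schwartz function), apply Theorem~\ref{lip} to the smoother window via the symbol estimates collected in Lemma~\ref{sjosymbol}, and control the approximation error in operator norm uniformly in $\alpha$ (the paper does this through the Weyl-symbol bound of Lemma~\ref{lem:norm-S} instead of the Bessel estimate, but either works). Your $\varepsilon/3$ argument is exactly the content of the paper's Lemma~\ref{lem:approx-frame-op} followed by the short density step.
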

When $\Lambda$ is a lattice, the frame operator $S_{g,\Lambda}$ has a periodic Weyl symbol, and Theorem \ref{thm:cont} follows from a classical result of Elliott \cite{ell82}; see also \cite[Theorem 1]{bel94}. To cover general time-frequency nodes $\Lambda$ we shall resort to \cite{grorosp23}.

The density theorem for Gabor frames states that every frame $\cG
(g,\Lambda )$ must satisfy the necessary density condition
$D^-(\Lambda ) \geq 1$, where $D^-(\Lambda )$ is the lower Beurling
density of $\Lambda $. If 
$g\in M^1(\R^d)$, then the  strict inequality    $D^-(\Lambda ) >1$
is necessary to obtain a Gabor frame,
see~\cite{asfeka14,grorcero15,heil07}. In particular, if $g\in
M^1(\R^d)$ and $D^-(\Lambda ) = 1$, then  $A(\Lambda ) = 0$. This is
the so-called Balian-Low phenomenon~\cite{BHW95}. For
practical purposes, however, it is  important to quantify how the
lower frame bound   $A(\alpha\Lambda)$ degenerates  to zero as
$\alpha\to 1$ since the ratio 
$B(\Lambda)/A(\Lambda)$ governs the quality of reconstruction methods.  

This question can be fully answered with the help of
Theorem~\ref{lip}  and an approximation argument. We  show that
windows from the  intermediate spaces $M^1_2(\R^d)\subset
M^1_s(\R^d)\subset M^1(\R^d)$, $0<s<2$, yield   
H\"older-continuous dependence of the frame bounds.

\begin{theorem}\label{thm:holder}
Let $g\in M^1_s(\R^d)$, $0<s\leq 2$, and $\Lambda\subset\R^{2d}$ be
relatively separated. Define the function
$\gamma:(0,2]\to(0,1]$ by
\begin{equation}
  \label{eq:c2}
  \gamma (s) =
  \begin{cases}
    \frac{s}{2(4-3s)} &  0<s<1 \\
\frac{s}{2} & 1\leq s \leq 2 \\
1 & s\geq 2
\end{cases}.
\end{equation}
Then the frame bounds of $\mathcal{G}(g,\alpha\Lambda)$ are $\gamma(s)$-H\"older continuous at $\alpha=1$. More precisely,
$$
|\sigma_\pm(S_{g,\Lambda})-\sigma_\pm(S_{g,\alpha\Lambda})|\leq C_{d,s}\cdot \emph{rel}(\Lambda) \cdot|1-\alpha|^{\gamma(s)}\cdot\|g\|_{M^1_s}^2,
\qquad 3/4 < \alpha < 2.
$$
\end{theorem}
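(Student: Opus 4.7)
The strategy is to use Theorem~\ref{lip} in two distinct ways: directly when $1\leq s\leq 2$, and through an approximation of the window when $0<s<1$. Setting $\alpha=\sqrt{1+\delta}$, the frame operator $S_{g,\alpha\Lambda}$ is the \psdo\ with Weyl symbol $\sum_{\lambda\in\Lambda}T_{\sqrt{1+\delta}\lambda}W(g,g)$, where $W(g,g)$ denotes the Wigner distribution of $g$; as in \cite{grorosp23}, a dilation substitution rewrites this as $\sigma_\delta=D_{\sqrt{1+\delta}}G_\delta$ for a real-valued, weakly $\delta$-differentiable symbol $G_\delta$ that depends bilinearly on $g$. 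Since $|1-\alpha|\asymp|\delta|$ and $\emph{rel}(\alpha\Lambda)\asymp\emph{rel}(\Lambda)$ for $\alpha$ near $1$, H\"older bounds in $\delta$ pass to H\"older bounds in $\alpha$ with the same exponent; for $\alpha$ bounded away from $1$ inside $(3/4,2)$ the claimed inequality follows trivially from Sj\"ostrand's operator-norm bound on $S_{g,\alpha\Lambda}$ after absorbing $|1-\alpha|^{\gamma(s)}$ into the constant.

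For $1\leq s\leq 2$ I would apply Theorem~\ref{lip} directly. The task is to verify the bilinear estimates
\[
\|G_\delta\|_{M^{\infty,1}_{0,s}(\rdd)}+\|\partial_\delta G_\delta\|_{M^{\infty,1}_{0,s-2}(\rdd)}\ \leq\ C\cdot \emph{rel}(\Lambda)\cdot \|g\|_{M^1_s}^2,
\]
uniformly for $\delta$ in a neighborhood of $0$, by extending the Wiener-amalgam and Walnut-type bounds of \cite{grorosp23} from $s=2$ to the full range $s\in[1,2]$. Theorem~\ref{lip} then yields H\"older exponent $s/2$, which equals $\gamma(s)$ in this regime.

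For $0<s<1$ I would approximate the window. Decompose $g=g_\epsilon+r_\epsilon$ by truncating $V_\varphi g$ to the box $\{(x,\omega)\in\rd\times\rd:|x|\leq 1/\epsilon,\,|\omega|\leq 1/\epsilon\}$, using a smooth cutoff combined with the STFT reconstruction formula $g=\|\varphi\|_2^{-2}V_\varphi^*V_\varphi g$ so that the pieces remain genuine distributions. Because the $M^1_s$ norm uses the \emph{product} weight $(1+|x|)^s(1+|\omega|)^s$, weight-shifting on the cutoff and its complement produces
\[
\|r_\epsilon\|_{M^1}\leq C\,\epsilon^{s}\,\|g\|_{M^1_s}\qquad\text{and}\qquad \|g_\epsilon\|_{M^1_1}\leq C\,\epsilon^{-2(1-s)}\,\|g\|_{M^1_s},
\]
the critical feature being that the second exponent is $2(1-s)$ rather than $1-s$ because truncation happens simultaneously in time and frequency. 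The bilinear Walnut bound
\[
\|S_{g,\Lambda'}-S_{g_\epsilon,\Lambda'}\|_{\mathrm{op}}\ \leq\ C\,\emph{rel}(\Lambda')\,\|g-g_\epsilon\|_{M^1}\bigl(\|g\|_{M^1}+\|g_\epsilon\|_{M^1}\bigr),
\]
applied with $\Lambda'=\Lambda$ and $\Lambda'=\alpha\Lambda$, converts the first truncation estimate into the spectral control $|\sigma_\pm(S_{g,\Lambda'})-\sigma_\pm(S_{g_\epsilon,\Lambda'})|\leq C\,\emph{rel}(\Lambda)\,\epsilon^{s}\,\|g\|_{M^1_s}^2$, while the already established $s'=1$ case of Theorem~\ref{thm:holder}, applied to the smoothed window $g_\epsilon\in M^1_1(\rd)$, yields
\[
|\sigma_\pm(S_{g_\epsilon,\Lambda})-\sigma_\pm(S_{g_\epsilon,\alpha\Lambda})|\ \leq\ C\,\emph{rel}(\Lambda)\,|1-\alpha|^{1/2}\,\|g_\epsilon\|_{M^1_1}^2\ \leq\ C\,\emph{rel}(\Lambda)\,\epsilon^{-4(1-s)}|1-\alpha|^{1/2}\|g\|_{M^1_s}^2.
\]
The triangle inequality produces a combined bound of the form $C\,\emph{rel}(\Lambda)\,\|g\|_{M^1_s}^2\,\bigl(\epsilon^{s}+\epsilon^{-4(1-s)}|1-\alpha|^{1/2}\bigr)$, and the choice $\epsilon=|1-\alpha|^{1/(2(4-3s))}$ balances the two contributions and delivers the sharp exponent $\gamma(s)=s/(2(4-3s))$.

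The principal obstacles I anticipate are (i) establishing the quadratic bilinear bounds on $G_\delta$ and $\partial_\delta G_\delta$ in the correct weighted Sj\"ostrand classes for every $s\in[1,2]$, which requires adapting rather than merely citing the $s=2$ analysis of \cite{grorosp23}; and (ii) correctly accounting for the factor $2$ in $\|g_\epsilon\|_{M^1_1}\leq C\epsilon^{-2(1-s)}\|g\|_{M^1_s}$ forced by the product-weight structure of $M^1_s(\rd)$ — this product-weight phenomenon is precisely what turns the naive single-variable exponent $s/(2(2-s))$ into the sharp $s/(2(4-3s))$.
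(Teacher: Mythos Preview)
Your proposal is correct and follows essentially the same route as the paper. The paper likewise splits into the regime $1\leq s\leq 2$ (handled by a direct application of Theorem~\ref{lip} after verifying, via the convolution inequality~\eqref{eq:c7} and the Wigner estimate of Lemma~\ref{lem:aux-W(G)}, the exact bilinear bounds you anticipate on $G_\delta$ and $\partial_\delta G_\delta$) and the regime $0<s<1$ (handled by the same STFT-truncation of the window, the same approximation exponents $\varepsilon^{s}$ and $\varepsilon^{-2(1-s)}$ coming from the product weight, and the same optimization yielding $\gamma(s)=s/(2(4-3s))$). The only cosmetic differences are that the paper uses the parametrization $\sqrt{1+\delta}=1/\alpha$ rather than $\alpha=\sqrt{1+\delta}$, and truncates $V_\varphi g$ to a Euclidean ball rather than a box; neither affects the argument.
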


Theorem~\ref{thm:holder}  describes the quantitative behavior of the lower frame bound near the critical density $D^-(\Lambda)=1$ for windows in $M_s^1(\R^d)$, $s>0$,
$$
A(\alpha\Lambda)\lesssim \left\{\begin{array}{ll}(1-\alpha)^{s/(2(4-3s))}& 0<s<1,\\ (1-\alpha)^{s/2},& 1\leq s<2,
\\
1-\alpha, & s\geq 2,
\end{array}
\right.
$$
for $\alpha<1$, $\alpha\to 1$.
The nonlinear behavior of the exponent for $0<s<1$ is an artefact of our proof. In fact, with some more effort, one can show that the exponent $s/2$ holds for a larger range of $s$, though we decided not to include the required technicalities in this article.

\vspace{3mm}

\textbf{Saturation.} One may wonder what happens when the smoothness
parameter $s$ in Theorems~\ref{lip} and \ref{thm:holder} becomes very large. Does
the smoothness of the extreme spectral values increase? The answer is
no and follows from some examples in the theory of Gabor frames.

Specifically, for  
  the Gaussian window is was shown in  \cite{BGL10}  that the lower
  frame bound behaves as 
$$
A(\alpha\Z^2)\asymp (1-\alpha), \quad 1/2 \leq \alpha\leq 1,\quad\text{ and }\alpha\to 1.
$$
Similar results were obtained for the exponential functions
$e^{-t}\chi _{[0,\infty )}$ and $e^{- |t|}$ \cite{KS14}.

Since the Gaussian $\varphi (t) = e^{-\pi t^2}$ is a Schwartz
function and therefore in all $M^1_s, s\geq 0$,  Lipschitz continuity
is the strongest possible mode of continuity one can prove in general,
no matter how big the exponent of the weight $s$ is chosen.
In addition, for the Gaussian window, $A(\alpha \Z ^2)$ is not differentiable at $\alpha
=1$, because $A(\alpha\Z^2)\asymp (1-\alpha)$ for $\alpha \leq 1$ and
$A(\alpha\Z^2)= 0$ for $\alpha \geq 1$.

We conclude that the smoothness of the extreme spectral values is
saturated at Lipschitz continuity. While such saturation phenomena are
well known in approximation theory, we are unaware of similar results
in spectral theory. 

\section{Background and Tools}\label{sec:tools}

\subsection{Notation}
Euclidean balls of radius $R$ and center $x$ in $\R^d$ are denoted $B_{R,d}(x)$. We let $\varphi(t)=2^{d/4}e^{-\pi |t|^2}$ denote the standard Gaussian
 in $\R^d$. The dilation operator acts on a
function $f: \mathbb{R}^d \to \mathbb{C}$ by $D_a f(x) = f(ax)$,
$a>0$. %
 The symbol
$\lesssim $ in $f \lesssim g$ means that $f(x) \leq C g(x)$ for all
$x$ with a constant $C$ independent of $x$, and $f \asymp g$ combines
both $f \lesssim g$ and $g\lesssim f$.   Finally, $\chi_\Omega$ denotes the characteristic function of $\Omega\subset\R^d$.

\subsection{Norm differences and spectral extrema}
We will need the following elementary estimate bounding the difference
of the extreme spectral  values of two operators by the operator norm of their  difference.
For a proof of this lemma, see, e.g., \cite[Lemma~2.1]{grorosp23}.
\begin{lemma}\label{lem:norm-bd-edges}
	Let $\cH$ be  a Hilbert space, and $A_1 ,A_2 \in B(\cH)$ be
        self-adjoint operators. %
 Then
	\begin{equation}\label{eq:diff-of-norms0}
		\big|\sigma_\pm(A_1)-\sigma_\pm(A_2)\big|\leq \|A_1-A_2\|_{ B(\cH)}.
	\end{equation}
\end{lemma}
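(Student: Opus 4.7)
The plan is to reduce the spectral inequality to an elementary estimate on quadratic forms via the variational characterization of the extreme spectral values of a bounded self-adjoint operator. Specifically, for such an operator $A$ the spectrum is real, contained in the closure of the numerical range, and one has
$$
\sigma_+(A) = \sup_{\|f\|=1}\langle Af,f\rangle, \qquad \sigma_-(A) = \inf_{\|f\|=1}\langle Af,f\rangle,
$$
a standard consequence of the spectral theorem (equivalently, it follows from the identity $\|A\|_{B(\cH)}=\sup_{\|f\|=1}|\langle Af,f\rangle|$ applied to the shifts $A-\lambda I$).

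Granting this, the bound is almost immediate. First I would write, for any unit vector $f\in\cH$,
$$
\langle A_1 f,f\rangle = \langle A_2 f,f\rangle + \langle (A_1-A_2)f,f\rangle \leq \langle A_2 f,f\rangle + \|A_1-A_2\|_{B(\cH)},
$$
using Cauchy–Schwarz on the last term. Taking the supremum over unit vectors $f$ gives $\sigma_+(A_1) \leq \sigma_+(A_2) + \|A_1-A_2\|_{B(\cH)}$, and swapping the roles of $A_1$ and $A_2$ yields the matching reverse inequality, so $|\sigma_+(A_1)-\sigma_+(A_2)| \leq \|A_1-A_2\|_{B(\cH)}$. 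The estimate for $\sigma_-$ follows by the identical argument with $\inf$ in place of $\sup$, or alternatively by applying the $\sigma_+$ bound to $-A_1$ and $-A_2$ together with the identity $\sigma_-(A) = -\sigma_+(-A)$.

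There is no real obstacle in this proof: the only non-trivial ingredient is the numerical-range representation of $\sigma_\pm$, after which the argument is one line. The lemma is precisely the place where self-adjointness is used in an essential way, since for general bounded operators the Hausdorff distance between spectra is not controlled by the operator norm difference. Because the authors cite \cite[Lemma~2.1]{grorosp23} for the proof, I expect them either to reproduce exactly this two-step argument or to simply refer the reader to that reference.
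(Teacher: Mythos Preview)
Your proof is correct and is the standard argument via the variational characterization of the extreme spectral values. The paper does not actually give a proof of this lemma; it simply refers the reader to \cite[Lemma~2.1]{grorosp23}, exactly as you anticipated.
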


\subsection{Time-frequency representations}
We now collect some basic results on  \tfa, modulation spaces, and pseudo-differential operators that will be needed to prove our main results.
Thorough introductions to the topic   can be found in the textbook~\cite{groe1} and the two
recent monographs~\cite{BO20,CRbook}.  

For a point $z=(x,\omega)\in \R^{2d}$,  the \emph{\tfs} of $f$ is defined as
$$
\pi (z)f(t)=M_\omega T_xf(t)= e^{2\pi i \omega\cdot t}f(t-x),
$$
where $T_xf(t)=f(t-x)$ and $M_\omega f(t)=e^{2\pi i \omega\cdot t}f(t)$.
The \emph{short-time Fourier transform} of a function or distribution
$f$ on $\R^d$  with respect to a window function  $g$ is given by
\begin{align*}
V_gf(x,\omega)&=\int_{\R^d}f(t)\overline{g(t-x)}e^{-2\pi i \omega\cdot t}\ud t =\langle f,M_\omega T_xg\rangle=    \langle f,\pi (z)g\rangle .
\end{align*}
When $g$ is normalized by $\norm{2}{g}=1$, then $V_g:L^2(\mathbb{R}^d) \to L^2(\mathbb{R}^{2d})$ is an isometry %
\begin{align}\label{eq_iso}
\int_{\rdd } |\langle f, \pi(z)g\rangle |^2\, \ud z = \int_{\rdd } |V_g f(z)|^2 \, \ud z = \|f\|_2^2, \qquad f \in L^2(\mathbb{R}^d).
\end{align}
The isometry property of the \stft\, yields the following continuous resolution of the identity
\begin{equation}
\label{eq:c4}
f=\int _{\rdd} \langle f, \pi(z)g \rangle  \pi(z) g\, \ud z,\quad f\in L^2(\R^d),
\end{equation}
where the integral should be interpreted in the weak sense. Elementary calculations show  that for $z_1,z_2\in\R^{2d}$
\begin{equation}\label{eq:difference}
|\langle \pi(z_1)f,\pi(z_2)g\rangle |=|V_g(\pi(z_1)f)(z_2)|=|V_g f(z_2-z_1)|.
\end{equation}
The \emph{(cross-)Wigner distribution} of $f,g\in L^2(\R^d)$ is
\begin{align}\label{eq_wigner}
\mathcal{W}(f,g)(x,\omega)=\int_{\R^d}f\Big(x+\frac{t}{2}\Big)\overline{g\Big(x-\frac{t}{2}\Big)}e^{-2\pi i \omega\cdot t}\, \ud t.
\end{align}
If $f=g$, we write $\mathcal{W}(f)$.

\subsection{Modulation spaces} Recall the definition of  the mixed-norm modulation spaces given in \eqref{eq:def-mod-sp}. We will use the following estimate for the convolution of functions and distributions in \modsp s, taken from \cite[Prop. 2.4]{cogroe03}.
  If $f\in M^\infty(\R^d) $ and $g\in M^1_{0,s}(\R^d)$, $s\in\R$, then $f \ast g \in M^{\infty,1}_{0,s}(\R^d)$ and
 \begin{equation}
 \label{eq:c7}
 \| f \ast g\|_{M^{\infty,1}_{0,s}} \lesssim \|f \|_{M^\infty } \,
 \|g\|_{M^{1}_{0,s}}\, .
 \end{equation}
 The actual statement in \cite{cogroe03} does not allow for negative exponents $s<0$ of the weight on the right hand side above. However, as we only need the estimate for weights in the frequency variable, one can simply follow the proof in \cite{cogroe03} step by step without  significant changes.  
We will therefore not repeat the argument here.

As we note below, the space $M^\infty(\R^d)$ contains atomic measures supported on relatively separated sets.
\begin{lemma}\label{lemma_mu}
	Let $\Lambda \subset \mathbb{R}^{d}$ be relatively
        separated. Then the discrete measure  $\mu := \sum_{\lambda
          \in \Lambda} \delta_\lambda $ belongs to $ M^{\infty}(\mathbb{R}^d)$. Moreover, if $\emph{rel}(\Lambda):=\sup_{x\in\R^{d}}\#\{\lambda\in\Lambda\cap x+[0,1]^{d}\}$ then
	$$
	\|\mu\|_{M^\infty}\lesssim \emph{rel}(\Lambda).
	$$
\end{lemma}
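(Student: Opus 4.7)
The plan is to estimate $\|\mu\|_{M^\infty}=\|V_\varphi\mu\|_{L^\infty}$ directly from the definition of the short-time Fourier transform, using only two ingredients: the rapid (Gaussian) decay of the window $\varphi$, and the fact that $\Lambda$ contains at most $\text{rel}(\Lambda)$ points in any unit cube.

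First I would observe that since $\Lambda$ is relatively separated, the sum $\mu=\sum_{\lambda\in\Lambda}\delta_\lambda$ is a Radon measure of polynomial growth and therefore a tempered distribution; its STFT against the Gaussian window is well defined and given pointwise by
\begin{equation*}
V_\varphi\mu(x,\omega)=\langle\mu,\pi(x,\omega)\varphi\rangle=\sum_{\lambda\in\Lambda}\overline{\varphi(\lambda-x)}\,e^{-2\pi i\omega\cdot\lambda}.
\end{equation*}
Taking absolute values and using $|\varphi|=\varphi$ yields the pointwise bound
\begin{equation*}
|V_\varphi\mu(x,\omega)|\leq\sum_{\lambda\in\Lambda}\varphi(\lambda-x),
\end{equation*}
which is independent of $\omega$.

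Next I would decompose $\R^d$ into a union of translated unit cubes indexed by $\Z^d$. For each $x\in\R^d$ and $k\in\Z^d$, the defining property of $\text{rel}(\Lambda)$ gives
\begin{equation*}
\#\{\lambda\in\Lambda\cap (x+k+[0,1]^d)\}\leq\text{rel}(\Lambda),
\end{equation*}
while on the same cube $\varphi(\lambda-x)\lesssim e^{-c|k|^2}$ by the Gaussian decay of $\varphi$ (with $c>0$ a dimensional constant). Summing in $k$,
\begin{equation*}
\sum_{\lambda\in\Lambda}\varphi(\lambda-x)\leq\sum_{k\in\Z^d}\text{rel}(\Lambda)\sup_{y\in k+[0,1]^d}\varphi(y)\lesssim\text{rel}(\Lambda)\sum_{k\in\Z^d}e^{-c|k|^2}\lesssim\text{rel}(\Lambda),
\end{equation*}
which gives $\|V_\varphi\mu\|_{L^\infty}\lesssim\text{rel}(\Lambda)$ and hence the claim.

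There is no serious obstacle; the only points that require a bit of care are (i) justifying that the formal series for $V_\varphi\mu$ equals the distributional pairing (this is immediate because $\pi(z)\varphi\in\mathcal{S}(\R^d)$ and $\mu$ is a positive Radon measure of polynomial growth, so Fubini/dominated convergence apply), and (ii) handling the decomposition so that the covering constant and the Gaussian tail give the dimension-dependent implicit constant absorbed into $\lesssim$.
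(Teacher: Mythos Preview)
Your proof is correct and follows essentially the same idea as the paper, which simply remarks that the lemma ``can be proved by a direct calculation (which is easily carried out by taking a window function $g\in M^1(\R^d)$ supported on $[0,1]^d$).'' The only difference is cosmetic: with a compactly supported window the sum $\sum_{\lambda\in\Lambda}|g(\lambda-x)|$ has at most $\mathrm{rel}(\Lambda)$ nonzero terms, so the bound $|V_g\mu(x,\omega)|\leq \mathrm{rel}(\Lambda)\|g\|_\infty$ is immediate and one avoids the Gaussian tail sum over $\Z^d$; the equivalence of modulation space norms for different $M^1$ windows then gives the result. Your route via the canonical Gaussian window and a cube decomposition is equally valid and only marginally longer.
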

This can be proved by a direct calculation (which is easily carried out by taking a window function $g\in M^1(\R^d)$ supported on $[0,1]^d$).

The following approximation lemma is the main technical ingredient of
our proofs. The estimates are reminiscent of notions in approximation
theory, such as the K-functional, but they seem to be new for \modsp s. 
\begin{lemma}\label{lem:2}
Let $a,b,c\in\R$, $a\leq b\leq c$.
\begin{enumerate}
\item[(i)] Let $g\in M^{\infty,1}_{0,b}(\R^d)$. For every   $0<\varepsilon\leq 1$, there exist $h=h_{\varepsilon,g}\in M^{\infty,1}_{0,c}(\R^d)$,  such that
\begin{equation*}
\|g-h\|_{M^{\infty,1}_{0,a}}\lesssim \varepsilon^{b-a}\,
\|g\|_{M^{\infty,1}_{0,b}}  \quad \text{and}\quad \|h\|_{M^{\infty,1}_{0,c}}\lesssim \varepsilon^{-(c-b)}\, \|g\|_{M^{\infty,1}_{0,b}}.
\end{equation*}
\item[(ii)] Let $g\in M^{1}_{b}(\R^d)$. For every   $0<\varepsilon\leq 1$, there exist $h=h_{\varepsilon,g}\in M^{1}_{c}(\R^d)$,  such that
\begin{equation*}
\|g-h\|_{M^{1}_{a}}\lesssim \varepsilon^{b-a}\, \|g\|_{M^{1}_{b}}  \quad \text{and}\quad \|h\|_{M^{1}_{c}}\lesssim \varepsilon^{-2(c-b)}\, \|g\|_{M^{1}_{b}}.
\end{equation*}
\end{enumerate}
\end{lemma}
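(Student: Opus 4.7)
The plan is to construct $h$ via a smooth truncation of $V_\varphi g$ in the STFT inversion formula, reducing everything to weighted integrals that are controlled by $\varepsilon$-scaled weight comparisons on the support of the cutoff. Fix $\psi\in C_c^\infty(\R^d)$ with $\psi\equiv 1$ on $B_{1/2,d}(0)$ and $\supp\psi\subseteq B_{1,d}(0)$. For part (i), set $\psi_\varepsilon(\omega):=\psi(\varepsilon\omega)$ and define $h$ weakly by
$$h:=\int_{\R^{2d}}V_\varphi g(x,\omega)\,\psi_\varepsilon(\omega)\,\pi(x,\omega)\varphi\,dx\,d\omega.$$
Computing the STFT of $h$ and using the identity $|\langle\pi(z)\varphi,\pi(z')\varphi\rangle|=|V_\varphi\varphi(z'-z)|=e^{-\pi|z'-z|^2/2}$ for the Gaussian window, one obtains the pointwise bound $|V_\varphi h(z')|\leq\int|V_\varphi g(z)|\,\psi_\varepsilon(z_2)\,e^{-\pi|z'-z|^2/2}\,dz$, and an analogous estimate for $|V_\varphi(g-h)(z')|$ with $1-\psi_\varepsilon$ in place of $\psi_\varepsilon$.

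Since $e^{-\pi|z|^2/2}$ factorises as $e^{-\pi|z_1|^2/2}e^{-\pi|z_2|^2/2}$, taking $\sup_{x'}$ in the above pointwise bound costs only a Young-type constant and leaves a one-dimensional Gaussian convolution in the frequency variable. The weight bound $\int(1+|\omega'|)^s e^{-\pi|\omega'-\omega|^2/2}\,d\omega'\lesssim(1+|\omega|)^s$, valid for every real $s$, then reduces the two norms to
$$\int\|V_\varphi g(\cdot,\omega)\|_\infty\,\psi_\varepsilon(\omega)(1+|\omega|)^c\,d\omega\quad\text{and}\quad\int\|V_\varphi g(\cdot,\omega)\|_\infty\,(1-\psi_\varepsilon(\omega))(1+|\omega|)^a\,d\omega.$$
The $\varepsilon$-powers now fall out from weight comparisons on the relevant supports: $\supp\psi_\varepsilon\subseteq\{|\omega|\leq 1/\varepsilon\}$ together with $c\geq b$ gives $(1+|\omega|)^{c-b}\leq(2/\varepsilon)^{c-b}$ and hence $\|h\|_{M^{\infty,1}_{0,c}}\lesssim\varepsilon^{-(c-b)}\|g\|_{M^{\infty,1}_{0,b}}$; while $\supp(1-\psi_\varepsilon)\subseteq\{|\omega|\geq 1/(2\varepsilon)\}$ together with $b\geq a$ gives $1\leq(2\varepsilon(1+|\omega|))^{b-a}$ and hence $\|g-h\|_{M^{\infty,1}_{0,a}}\lesssim\varepsilon^{b-a}\|g\|_{M^{\infty,1}_{0,b}}$.

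For part (ii), the weight $(1+|z_1|)^s(1+|z_2|)^s$ acts on both variables, so we use the two-variable cutoff $\psi_\varepsilon(z):=\psi(\varepsilon z_1)\psi(\varepsilon z_2)$ in the same construction. The Gaussian-convolution estimate applied on $\R^{2d}$ (without a preliminary sup) reduces $\|h\|_{M^1_c}$ to $\int|V_\varphi g(z)|\psi_\varepsilon(z)(1+|z_1|)^c(1+|z_2|)^c\,dz$; on $\supp\psi_\varepsilon$ both $(1+|z_i|)^{c-b}\leq(2/\varepsilon)^{c-b}$, which doubles the exponent to $-2(c-b)$. For the error bound, on $\supp(1-\psi_\varepsilon)$ at least one of the $|z_i|$ exceeds $1/(2\varepsilon)$, so $(1+|z_1|)(1+|z_2|)\geq 1/(2\varepsilon)$ and the previous argument recovers $\varepsilon^{b-a}$. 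The one technical subtlety is the weighted Gaussian-convolution estimate for arbitrary real $s$ (including negative exponents), handled by splitting the integration region at $|\omega'-\omega|\asymp 1+|\omega|$, where the weight is comparable to $(1+|\omega|)^s$, versus its complement, where Gaussian decay dominates; everything else is careful bookkeeping of how the cutoff scale $\varepsilon$ interacts with the polynomial weight on each variable.
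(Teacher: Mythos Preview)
Your proof is correct and follows essentially the same route as the paper: define $h$ by truncating $V_\varphi g$ in the STFT inversion formula, push the modulation-space norms through the Gaussian reproducing kernel (the paper phrases this as weighted Young's inequality rather than an explicit Gaussian bound), and read off the $\varepsilon$-powers from weight comparison on the support of the cutoff. The only cosmetic differences are that the paper uses a sharp characteristic function rather than a smooth $\psi$, and in part (ii) truncates to a Euclidean ball $B_{R,2d}(0)$ rather than your product box $\{|z_1|,|z_2|\leq 1/\varepsilon\}$; both shapes yield the same exponents for the tensor weight.
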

(Here, the implied constants depend on $|a|$ and $|c|$.)
\begin{proof}
Let $R\geq 0$ and define $\Omega_R=\R^d\times B_{R,d}(0)$. For $R$ to be
determined later, we define   the frequency truncation of $g$ as 
\begin{equation}\label{eq:def-approx-h}
h=\int_{\R^{2d}} \chi_{\Omega_R}(z)V_\varphi g(z)\pi(z)\varphi\ud z \, .
\end{equation}
With a small computation one sees that $h$ is obtained from $g$ by a
Fourier multiplier, but for  \modsp s estimates we will need the
reproducing formula \eqref{eq:c4} and the covariance formula
\eqref{eq:difference}. Applying these identities we find that 
\begin{align*}
\|g&-h\|_{M^{\infty,1}_{0,a}}\\ &=\int_{\R^{d}}
\sup_{x\in\R^d}\left|\int_{\R^{2d}}\big(V_\varphi
                                  g(z')\hspace{-1pt}-\hspace{-1pt}\chi_{\Omega_R}(z') V_\varphi g(z')\big)\langle \pi(z')\varphi,\pi(z)\varphi\rangle \ud z'\right|\hspace{-1pt} (1+|\omega|)^a\ud \omega
\\
&\leq \int_{\R^{d}}\sup_{x\in\R^d}\int_{\R^{2d}}\chi_{\Omega_R^c}(z')|V_\varphi g(z')|\, |V_\varphi\varphi(z-z')| \ud z' (1+|\omega|)^a\ud \omega
\\
&=\big\|\big(\chi_{\Omega_R^c}\cdot|V_\varphi g|\big)\ast |V_\varphi\varphi|\big\|_{L^{\infty,1}_{0,a}}\leq \big\|\chi_{\Omega_R^c}\cdot V_\varphi g\big\|_{L^{\infty,1}_{0,a}}\, \big\| V_\varphi\varphi\big\|_{L^1_{0,|a|} }, 
\end{align*}
where we used a weighted version of Young's convolution inequality, see, for example, \cite[Proposition 11.1.3]{groe1}.
We may further estimate the first factor as 
\begin{align*}
 \big\|\chi_{\Omega_R^c} \cdot  V_\varphi g \big\|_{L^{\infty,1}_{0,a}}
  &=  \int_{\R^d\backslash B_{R,d}(0)}\sup_{x\in\R^d}|V_\varphi g(x,\omega)|  (1+|\omega|)^a \ud\omega
  \\
 &\leq (1+R)^{-(b-a)}\int_{\R^d}\sup_{x\in\R^d}|V_\varphi g(x,\omega)| (1+|\omega|)^b  \ud\omega
 \\
 &=  (1+R)^{-(b-a)}\|g\|_{M^{\infty,1}_{0,b}}.
 \end{align*}
 To bound the $M_{0,c}^{\infty,1}$-norm of $h$, we argue similarly
 \begin{align*}
 \|h\|_{M^{\infty,1}_{0,c}}&=\int_{\R^{d}}\sup_{x\in\R^d}\left|\int_{\R^{2d}}\chi_{\Omega_R}(y)V_\varphi g(z')\langle \pi(z')\varphi,\pi(z)\varphi\rangle \ud z'\right|(1+|\omega|)^c \ud \omega
 \\
&\leq \big\|\big(\chi_{\Omega_R}\cdot|V_\varphi g|\big)\ast |V_\varphi\varphi|\big\|_{L^{\infty,1}_{0,c}}\leq \big\|\chi_{\Omega_R}\cdot V_\varphi g\big\|_{L^{\infty,1}_{0,c}}\, \big\| V_\varphi\varphi\big\|_{L^1_{0,|c|} },
 \end{align*}
and 
\begin{align*}
 \big\|\chi_{\Omega_R}\cdot V_\varphi g\big\|_{L^{\infty,1}_{0,c}}&=   \int_{B_{R,d}(0)} \sup_{x\in\R^d} |V_\varphi g(x,\omega)|  (1+|\omega|)^c \ud\omega
\\
&\leq  (1+R)^{c-b}\int_{B_{R,d}(0)} \sup_{x\in\R^d} |V_\varphi g(x,\omega)|  (1+|\omega|)^b \ud\omega
\\
&=(1+R)^{c-b}\|g\|_{M^{\infty,1}_{0,b}}.
\end{align*}
 Choosing $R=\varepsilon^{-1}-1$ then yields (i).

 To prove (ii), we  set 
$\Omega_R=B_{R,2d}(0)$ (note that this is now a ball in $\rdd$). We
now  argue  as in (i). This results in
\begin{align*}
\|g-h\|_{M^{1}_{a}}\lesssim \|\chi_{\Omega_R^c}\cdot V_\varphi g\|_{L^{1}_{a}}  , \quad\text{and}\quad  \|h\|_{M^{1}_{c}}\lesssim \|\chi_{\Omega_R}\cdot V_\varphi g\|_{L^{1}_{c}}.
\end{align*}
As before, with $R=\varepsilon^{-1}-1$  we obtain  
\begin{align*}
 \big\|\chi_{\Omega_R^c} &\cdot   V_\varphi g \big\|_{L^{1}_{a}}
   =  \int_{\R^{2d}\backslash B_{R,2d}(0)} |V_\varphi g(x,\omega)|  (1+|x|)^a(1+|\omega|)^a \ud z
  \\
    &=  \int_{\R^{2d}\backslash B_{R,2d}(0)} [(1+|x|)(1+|\omega|)]^{a-b}
    |V_\varphi g(x,\omega)|  (1+|x|)^b(1+|\omega|)^b \ud z
      \\
    &\leq  \int_{\R^{2d}\backslash B_{R,2d}(0)} (1+|z|)^{a-b}
    |V_\varphi g(x,\omega)|  (1+|x|)^b(1+|\omega|)^b \ud z
  \\
 &\leq (1+R)^{-(b-a)}\int_{\R^{2d}} |V_\varphi g(x,\omega)| (1+|x|)^b(1+|\omega|)^b  \ud z
 \\
 &=  (1+R)^{-(b-a)}\|g\|_{M^{1}_{b}}=\varepsilon^{b-a}\|g\|_{M^{1}_{b}},
 \end{align*}
 where we used that
 $1+|z|=1+|(x,0)+(0,\omega)| \leq 1 + |x| + |\omega| \leq (1+|x|)(1+|\omega|)$.
Similarly,
\begin{align*}
 \big\|\chi_{\Omega_R}\cdot V_\varphi g\big\|_{L^{1}_{c}}&=   \int_{B_{R,2d}(0)}   |V_\varphi g(x,\omega)| (1+|x|)^c (1+|\omega|)^c \ud z
\\
&\leq  (1+R)^{2(c-b)}\int_{B_{R,2d}(0)} |V_\varphi g(x,\omega)| (1+|x|)^b (1+|\omega|)^b \ud z
\\
&\leq(1+R)^{2(c-b)}\|g\|_{M^{1}_{b}}=\varepsilon^{-2(c-b)}\|g\|_{M^{1}_{b}}.
\end{align*}
\end{proof}

Let us write $X_i$ to denote the multiplication operator $X_i f(t)=t_i f(t),\ 1\leq i\leq d$. It follows from straightforward calculations that 
\begin{equation}
  \label{eq:ju1}
  \|X_i \partial _if \|_{M^{p,q}_{s,t}}\lesssim   \|f
  \|_{M^{p,q}_{s+1,t+1}} \, .
\end{equation}
The following lemma can be found in \cite[Lemma~2.3]{grorosp23}. 
\begin{lemma}\label{lem:aux-W(G)}
For $g,h\in M^1_{s+t}(\rd )$,  $s,t\geq 0$, we have $\mathcal{W}(g,h)
\in M^1_{s,t}(\rdd )$ with the norm estimate 
$$
\|\mathcal{W}(g,h)\|_{M_{s,t}^1}\lesssim \|g\|_{M_{s+t}^1}\|h\|_{M_{s+t}^1} .
$$
\end{lemma}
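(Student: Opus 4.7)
The plan is to reduce the $M^1_{s,t}$-norm of the cross-Wigner distribution to a product of $M^1_{s+t}$-norms of the two windows via the well-known algebraic identity relating the STFT of a Wigner distribution (with a Wigner window) to the STFTs of its constituents. Concretely, taking $\Phi=\mathcal{W}(\varphi)$ as the analyzing window on $\R^{2d}$, there is an identity of the form
\begin{equation*}
\bigl|V_{\Phi}\mathcal{W}(g,h)(z,\zeta)\bigr| \;=\; \bigl|V_\varphi g(u(z,\zeta))\bigr|\cdot \bigl|V_\varphi h(v(z,\zeta))\bigr|,
\end{equation*}
where $(u,v)$ is an invertible linear change of variables on $\R^{2d}\times\R^{2d}$ built from a symplectic rotation, with $z=(u+v)/2$ and $\zeta$ proportional to a $90^\circ$-rotation of $u-v$. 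This identity can be verified by a direct computation using the definitions \eqref{eq_wigner} and of the STFT, or quoted from standard references (it is essentially the covariance of the Wigner transform on phase space).

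With this formula, I would insert it into the definition of the mixed-norm \modsp\ norm with window $\Phi=\mathcal{W}(\varphi)$:
\begin{equation*}
\|\mathcal{W}(g,h)\|_{M^1_{s,t}} \;\asymp\; \int_{\R^{2d}}\int_{\R^{2d}} \bigl|V_\varphi g(u)\bigr|\,\bigl|V_\varphi h(v)\bigr|\,(1+|z|)^s (1+|\zeta|)^t \,\mathrm{d}z\,\mathrm{d}\zeta,
\end{equation*}
after performing the change of variables $(z,\zeta)\mapsto(u,v)$ (whose Jacobian is a harmless constant). The next step is the elementary weight estimate: since $|z|\leq(|u|+|v|)/2$ and $|\zeta|\lesssim |u|+|v|$, and since $s,t\geq 0$,
\begin{equation*}
(1+|z|)^s(1+|\zeta|)^t \;\lesssim\; (1+|u|+|v|)^{s+t} \;\lesssim\; (1+|u|)^{s+t}(1+|v|)^{s+t}.
\end{equation*}
This decouples the double integral into a product and yields the desired bound
\begin{equation*}
\|\mathcal{W}(g,h)\|_{M^1_{s,t}}\;\lesssim\; \|V_\varphi g\|_{L^1_{s+t}}\,\|V_\varphi h\|_{L^1_{s+t}}\;\asymp\; \|g\|_{M^1_{s+t}}\,\|h\|_{M^1_{s+t}}.
\end{equation*}

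The main obstacle is the algebraic identity for $V_{\mathcal{W}(\varphi)}\mathcal{W}(g,h)$; it is the only nontrivial ingredient, since once it is available, the proof is just a change of variables plus a sub-multiplicativity estimate for polynomial weights. One has to pin down the exact symplectic substitution $(z,\zeta)\leftrightarrow(u,v)$ correctly, which is a bookkeeping exercise with the oscillatory integrals defining $\mathcal{W}$, but presents no real analytic difficulty. The restriction $s,t\geq 0$ is used only in the weight splitting step; without it, the splitting into a product would break down and one would need a more delicate argument.
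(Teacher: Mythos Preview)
Your proposal is correct and follows the standard route. The paper itself does not prove this lemma but cites \cite[Lemma~2.3]{grorosp23}; the argument there is exactly the one you outline: use the window $\Phi=\mathcal{W}(\varphi)$ on $\R^{2d}$, invoke the factorization identity $|V_{\mathcal{W}(\varphi)}\mathcal{W}(g,h)(z,\zeta)|=|V_\varphi g(u)|\,|V_\varphi h(v)|$ with the linear change of variables $u=z+\tfrac{1}{2}J\zeta$, $v=z-\tfrac{1}{2}J\zeta$ (this is \cite[Lemma~14.5.1]{groe1}), and then split the weight using submultiplicativity. One small point you should make explicit in a final write-up: the paper's $M^1_{s+t}$-norm carries the tensor weight $(1+|u_1|)^{s+t}(1+|u_2|)^{s+t}$ rather than the radial weight $(1+|u|)^{s+t}$, so after bounding $(1+|z|)^s(1+|\zeta|)^t\lesssim (1+|u|)^{s+t}(1+|v|)^{s+t}$ you need one more application of $1+|u|\leq(1+|u_1|)(1+|u_2|)$ to land exactly on the norm as defined in \eqref{eq:def-mod-sp}.
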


Finally, we recall
  some norm estimates for the dilation operator on certain  modulation spaces: 
  Let $D_af(t)=f(at),\ a>0$, 
\begin{align}\label{eq_c}
	&\|D_a f\|_{M^{\infty,1}_{0,s}} \leq C_{d,s} \max\big\{1,a^{d}\big\}  \max\big\{1,a^{s}\big\} \|f\|_{M^{\infty,1}_{0,s}}, \qquad
	a>0,\ s\in\R \, ,
	\\
	\label{eq_cx}
	&\|D_a f\|_{M^{1}_{0,s}} \leq C_{d,s}   \max\big\{1,a^{-d} \big\}\max\big\{1,a^{s}\big\} \|f\|_{M^{1}_{0,s}}, \qquad\hspace{0.1cm}
	a>0,\ s\in\R \, .
\end{align}
See \cite[Theorem~1.1]{suto07} and \cite[Theorem 3.2]{coou12} for the weighted versions. 

\subsection{Pseudodifferential operators} 
The \modsp\  $M^{\infty
  ,1}(\rdd )$ is an important symbol class in the
theory of pseudodifferential operators, first used by Sj\"ostrand~\cite{Sjo94} as a class of
non-smooth  symbols that contains the H\"ormander class
$S^{0}_{0,0}$. See \cite{Gro06,GR06} for a
detailed \tfa\  of this Sj\"ostrand's class.

We will need the following properties, quoted from \cite[Theorem 2.5]{grorosp23}.
\begin{theorem}\label{lem:bdd}
(i) If $\sigma \in M^{\infty,1}(\rdd )$, then $\sigma^w $ is a bounded  operator on
$\lrd $ and
$$
\|\sigma ^w\|_{B(L^2(\R^d))} \lesssim \|\sigma
\|_{M^{\infty ,1}} \, .
$$

(ii)   Let $|\delta|<\delta_0<1$, and  $\{G_\delta\}_{ |\delta|<\delta_0}\subset
M^{\infty,1}(\R^{2d})$  be a family of real-valued symbols. If we set $T_\delta =
(D_{\sqrt{1+\delta}}G_\delta )^w$,
then 
\begin{align}\label{eq_a}
\|T_\delta\|_{B(L^2(\R^d))}\lesssim  \max\{1,(1+\delta)^{ {d} }\}
\|G_\delta\|_{M^{\infty,1}}\leq  (1+\delta_0)^{ {d} }\sup_{|t|<\delta_0}
\| {G_t}\|_{M^{\infty,1}}. 
\end{align}
\end{theorem}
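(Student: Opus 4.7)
The target here is Theorem~\ref{lem:bdd}: Sj\"ostrand's $L^2$-boundedness of Weyl operators with $M^{\infty,1}$ symbols, plus a dilation-adjusted version for the family $T_\delta$. My plan is to prove (i) by a standard time-frequency argument and then reduce (ii) to (i) via the dilation estimate (\ref{eq_c}).

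For part (i), I would start from the Weyl--Wigner duality, which expresses the bilinear form of $\sigma^w$ as a pairing with the cross-Wigner distribution:
\[
\langle \sigma^w f, g \rangle = \langle \sigma, \W(g,f) \rangle, \qquad f,g\in\S(\rd).
\]
Fix a Gaussian window $\Phi$ on $\rdd$ of product form (for instance $\Phi = \W(\varphi)$, which is itself Gaussian on $\rdd$). The key computation is the ``STFT-of-a-Wigner'' identity, which gives pointwise
\[
|V_\Phi \W(g,f)(Z,\zeta)| \;=\; |V_\varphi g(u_1(Z,\zeta))|\cdot |V_\varphi f(u_2(Z,\zeta))|,
\]
where $u_1,u_2:\R^{4d}\to\rdd$ are affine-symplectic reshufflings. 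Plugging this into the Parseval-type identity for the STFT on $\rdd$ and applying H\"older's inequality on the mixed-norm Lebesgue space $L^{\infty,1}(\R^{4d}) \times L^{1,\infty}(\R^{4d})$ yields
\[
|\langle \sigma^w f, g\rangle| \;\leq\; \|V_\Phi \sigma\|_{L^{\infty,1}}\,\|V_\Phi \W(g,f)\|_{L^{1,\infty}}.
\]
The first factor is $\asymp \|\sigma\|_{M^{\infty,1}}$ by window equivalence. For the second factor, Cauchy--Schwarz in the inner integration together with the STFT isometry $\|V_\varphi h\|_2 = \|h\|_2$ bounds it by $\|f\|_2\|g\|_2$. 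Taking the supremum over $\|f\|_2 = \|g\|_2 = 1$ gives (i).

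For part (ii), I would simply apply (i) to the symbol $\sigma_\delta = D_{\sqrt{1+\delta}} G_\delta \in M^{\infty,1}(\rdd)$, which gives $\|T_\delta\|_{B(\lrd)} \lesssim \|D_{\sqrt{1+\delta}} G_\delta\|_{M^{\infty,1}}$. Applying the dilation estimate (\ref{eq_c}) with $s=0$, but in dimension $2d$ rather than $d$ (since $G_\delta$ lives on $\rdd$), yields
\[
\|D_{\sqrt{1+\delta}} G_\delta\|_{M^{\infty,1}(\rdd)} \;\lesssim\; \max\{1,(\sqrt{1+\delta})^{2d}\}\,\|G_\delta\|_{M^{\infty,1}} \;=\; \max\{1,(1+\delta)^{d}\}\,\|G_\delta\|_{M^{\infty,1}}.
\]
The final inequality in (\ref{eq_a}) is then immediate: $|\delta|<\delta_0<1$ forces $\max\{1,(1+\delta)^d\}\leq (1+\delta_0)^d$, and $\|G_\delta\|_{M^{\infty,1}} \leq \sup_{|t|<\delta_0}\|G_t\|_{M^{\infty,1}}$.

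The main obstacle is the time-frequency identification of $V_\Phi \W(g,f)$ in Step~1: this ``magic formula'' is what makes the Sj\"ostrand boundedness transparent, and its verification is a careful but direct computation with Gaussian integrals and the symplectic covariance of the Wigner transform. Given that Theorem~\ref{lem:bdd} is quoted from~\cite{grorosp23} and ultimately from the work of Sj\"ostrand and Gr\"ochenig--Heil, in practice I would either cite the existing proofs or reproduce this one identity and let everything else follow from H\"older's inequality and the STFT isometry.
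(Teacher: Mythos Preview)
Your proposal is correct. Note that the paper does not actually prove Theorem~\ref{lem:bdd}; it simply quotes the result from~\cite{grorosp23} (and ultimately from Sj\"ostrand~\cite{Sjo94} and~\cite{groe1}). Your sketch for~(i) is precisely the standard time-frequency proof --- the Weyl--Wigner duality $\langle\sigma^w f,g\rangle=\langle\sigma,\W(g,f)\rangle$, the ``magic formula'' factorization of $V_{\W(\varphi)}\W(g,f)$ (this is Lemma~14.5.1 in~\cite{groe1}), and H\"older plus the $L^2$-isometry of the STFT --- which is exactly what the cited references contain. Your reduction of~(ii) to~(i) via the dilation bound~\eqref{eq_c} in dimension $2d$ is also correct and is how~\cite{grorosp23} handles it; you even anticipate the right outcome: citing the existing proofs rather than reproducing them.
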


\section{Proofs of the main results}\label{sec:gabor}

\subsection{H\"older continuity of extreme  spectral values}

Our proof  of Theorem~\ref{lip}   relies on the
Lipschitz continuity in the very same theorem for the case $s=2$,
which was already  established in \cite{grorosp23}, and on  a new approximation argument that may be useful in other
problems as well.   For this purpose, we approximate the Weyl
symbol $G_\delta \in M^{\infty ,1}_{0,s}$ with $s<2$ by a suitable $H_\delta$ in $M^{\infty ,1}_{0,2}$
and compare  the operator family
$T_\delta=(D_{\sqrt{1+\delta}}G_\delta)^w$ with
$U_\delta=(D_{\sqrt{1+\delta}}H_\delta)^w$. %

\begin{lemma}\label{lem:basic-estimate}
For each $|\delta|<\delta_0<1$, let $G_\delta\in M^{\infty,1}(\R^{2d})$ and $H_\delta\in M^{\infty,1}_{0,2}(\R^{2d})$ be such that $\partial_\delta H_\delta\in M^{\infty,1}(\R^{2d})$. 
Then the following holds for $\delta_1,\delta_2 \in (-\delta_0,\delta_0)$:
\begin{align*}
|\sigma_\pm(T_{\delta_1})&-\sigma_\pm(T_{\delta_2})|\lesssim (1-\delta_0)^{-(d+1)} \sup_{|t|<\delta_0} \|G_t-H_t\|_{M^{\infty,1}}\\*
 &+|\delta_1-\delta_2| (1-\delta_0)^{-(d+1)}\sup_{|t|<\delta_0} \big(\|H_t\|_{M^{\infty,1}_{0,2}}+\|\partial_tH_t\|_{M^{\infty,1}}\big).
\end{align*}
\end{lemma}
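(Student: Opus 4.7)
The plan is to reduce the bound to the already-established Lipschitz case $s=2$ from \cite{grorosp23} via a simple triangle-inequality argument. Introduce the auxiliary operator family $U_\delta = (D_{\sqrt{1+\delta}} H_\delta)^w$, which plays the role of a ``smooth'' approximant of $T_\delta$. Then insert $U_{\delta_1}$ and $U_{\delta_2}$ and split
\begin{align*}
|\sigma_\pm(T_{\delta_1})-\sigma_\pm(T_{\delta_2})|
& \leq |\sigma_\pm(T_{\delta_1})-\sigma_\pm(U_{\delta_1})|
  +|\sigma_\pm(U_{\delta_1})-\sigma_\pm(U_{\delta_2})| \\
& \quad +|\sigma_\pm(U_{\delta_2})-\sigma_\pm(T_{\delta_2})|.
\end{align*}

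For each endpoint term, apply Lemma~\ref{lem:norm-bd-edges} to bound the difference of spectral extrema by the operator norm $\|T_{\delta_i}-U_{\delta_i}\|_{B(L^2)}$. Since $T_{\delta_i}-U_{\delta_i}$ is the Weyl transform of $D_{\sqrt{1+\delta_i}}(G_{\delta_i}-H_{\delta_i})$, Theorem~\ref{lem:bdd}(ii) gives
\[
\|T_{\delta_i}-U_{\delta_i}\|_{B(L^2)} \lesssim (1+\delta_0)^d \, \|G_{\delta_i}-H_{\delta_i}\|_{M^{\infty,1}}
\lesssim (1-\delta_0)^{-(d+1)} \sup_{|t|<\delta_0} \|G_t-H_t\|_{M^{\infty,1}},
\]
using the elementary inequality $(1+\delta_0)^d \leq (1-\delta_0)^{-d}\leq (1-\delta_0)^{-(d+1)}$.

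For the middle term, observe that $H_\delta$ satisfies exactly the hypotheses of Theorem~\ref{lip} in the special case $s=2$, since $\partial_\delta H_\delta \in M^{\infty,1} = M^{\infty,1}_{0,0}$. Invoking that already-proven Lipschitz statement yields
\[
|\sigma_\pm(U_{\delta_1})-\sigma_\pm(U_{\delta_2})|
\leq C_d\, |\delta_1-\delta_2|\, (1-\delta_0)^{-(d+1)} \sup_{|t|<\delta_0}\bigl(\|H_t\|_{M^{\infty,1}_{0,2}}+\|\partial_t H_t\|_{M^{\infty,1}}\bigr).
\]
Combining the three pieces gives the claim. No real obstacle is expected: the argument is essentially a black-box use of the $s=2$ result plus the bounded-perturbation estimate from Theorem~\ref{lem:bdd}; the only point requiring a moment's care is the bookkeeping of the prefactor $(1-\delta_0)^{-(d+1)}$, which absorbs the dilation factor $(1+\delta_0)^d$ coming from Theorem~\ref{lem:bdd}(ii).
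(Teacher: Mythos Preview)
Your proposal is correct and follows essentially the same approach as the paper: both introduce the auxiliary family $U_\delta=(D_{\sqrt{1+\delta}}H_\delta)^w$, split via the triangle inequality, bound the two endpoint terms by Lemma~\ref{lem:norm-bd-edges} together with the $M^{\infty,1}$-boundedness of the Weyl calculus and the dilation estimate, and handle the middle term by invoking the $s=2$ case of Theorem~\ref{lip} already established in \cite{grorosp23}. The only cosmetic difference is that the paper cites Theorem~\ref{lem:bdd}(i) plus the dilation bound~\eqref{eq_c} separately, whereas you package them via Theorem~\ref{lem:bdd}(ii).
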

\begin{proof}
 Using the triangle inequality, we find 
\begin{align*}
|\sigma_\pm(T_{\delta_1})&-\sigma_\pm(T_{\delta_2})|\\ &\leq
                                                         |\sigma_\pm(T_{\delta_1})-\sigma_\pm(U_{\delta_1})|+|\sigma_\pm(U_{\delta_1})-\sigma_\pm(U_{\delta_2})|+|\sigma_\pm(U_{\delta_2})-\sigma_\pm(T_{\delta_2})|
                                                         \, .
\end{align*}
Since $H_\delta\in M^{\infty,1}_{0,2}(\R^{2d})$, Theorem~\ref{lip} for
$s=2$  (already proved in~\cite{grorosp23}) implies that
$$
|\sigma_\pm(U_{\delta_1})-\sigma_\pm(U_{\delta_2})| \lesssim 
|\delta_1-\delta_2| (1-\delta_0)^{-(d+1)}\sup_{|t|<\delta_0}
\big(\|H_t\|_{M^{\infty,1}_{0,2}}+\|\partial_tH_t\|_{M^{\infty,1}}\big).
$$
For the first and third term we use  Lemma~\ref{lem:norm-bd-edges} and
Theorem~\ref{lem:bdd} and bound the spectral values first by the
operator norm and then by the \modsp\ norm to obtain 
$$
 |\sigma_\pm(T_{\delta_1})-\sigma_\pm(U_{\delta_1})| \leq 
  \sup_{|t|<\delta_0}
\|D_{\sqrt{1+\delta}}(G_t-H_t)\|_{M^{\infty,1}}  \lesssim   \sup_{|t|<\delta_0}
\|G_t-H_t\|_{M^{\infty,1}} \, ,
$$
where we have used the dilation estimate  \eqref{eq_c} in the last
inequality.

The combination of  these estimate yields the claimed error bound. 
\end{proof}

This result suggests that quantifying the trade-off between the error
of approximating $G_\delta\in M^{\infty,1}_{0,s}(\R^{2d})$, $0<s\leq
2$, by a function $H_\delta\in M^{\infty,1}_{0,2}(\R^{2d})$ with
respect to the $M^{\infty,1}$-norm and the magnitude of the
$M^{\infty,1}_{0,2}$-norm of $H_\delta$ will lead to an explicit
estimate for the H\"older exponent  %
of the  extreme spectral values. The next lemma make this intuition precise.

\begin{lemma}\label{lem:approx-of-deriv}
    Let $G_\delta\in M^{\infty,1}_{0,s}(\R^{2d})$, $0<s<2$, be a family of symbols such that $\partial_\delta G_\delta \in M^{\infty,1}_{0,s-2}(\R^{2d})$. For every   $0<\varepsilon\leq 1$, there exist $H_\delta=H_{\varepsilon,G_\delta}\in M^{\infty,1}_{0,2}(\R^{2d})$ such that
\begin{enumerate}
\item[(i)] $
\|G_\delta-H_\delta\|_{M^{\infty,1}}\lesssim \varepsilon^{s}\, \sup_{|t|<\delta_0}\|G_t\|_{M^{\infty,1}_{0,s}},$ 
\item[(ii)]  $ \|H_\delta\|_{M^{\infty,1}_{0,2}}\lesssim \varepsilon^{-(2-s)}\, \sup_{|t|<\delta_0}\|G_t\|_{M^{\infty,1}_{0,s}},$ and 
\item[(iii)] 
$
 \|\partial_\delta H_\delta\|_{M^{\infty,1}}\lesssim \varepsilon^{-(2-s)}\, \sup_{|t|<\delta_0}\|\partial_t G_t\|_{M^{\infty,1}_{0,s-2}}.
 $
\end{enumerate}
(Here, the implied constants depend only on $d$.) 
\end{lemma}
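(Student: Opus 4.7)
The plan is to take $H_\delta$ to be the frequency-truncation of $G_\delta$ constructed explicitly in the proof of Lemma~\ref{lem:2}(i). Concretely, set $R := \varepsilon^{-1}-1$, $\Omega_R := \R^d \times B_{R,d}(0) \subset \R^{2d}$, and define the linear operator
\begin{equation*}
P_R F := \int_{\R^{2d}} \chi_{\Omega_R}(z)\, V_\varphi F(z)\, \pi(z)\varphi \, \ud z, \qquad H_\delta := P_R G_\delta.
\end{equation*}
The crucial structural feature is that $P_R$ is linear in its argument, which lets us treat $G_\delta$ and its weak $\delta$-derivative by the same device.

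Estimates (i) and (ii) will then be \emph{immediate} consequences of Lemma~\ref{lem:2}(i), applied to $G_\delta$ with parameters $a=0$, $b=s$, $c=2$ (which satisfy $a\leq b\leq c$ since $0<s\leq 2$). Indeed, the proof of Lemma~\ref{lem:2}(i) produces exactly this operator $P_R$ with $R=\varepsilon^{-1}-1$ and establishes both bounds pointwise in $\delta$; taking the supremum over $|t|<\delta_0$ on the right-hand sides gives the form stated in (i) and (ii).

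For (iii), I would exploit the linearity of $P_R$ together with the weak differentiability hypothesis to conclude that
\begin{equation*}
\partial_\delta H_\delta = P_R(\partial_\delta G_\delta).
\end{equation*}
Then I would apply the \emph{second} inequality of Lemma~\ref{lem:2}(i), this time to the function $\partial_\delta G_\delta \in M^{\infty,1}_{0,s-2}(\R^{2d})$ with parameters $b=s-2$ and $c=0$ (valid, since $s<2$ forces $b\leq c$). This yields
\begin{equation*}
\|\partial_\delta H_\delta\|_{M^{\infty,1}} = \|P_R(\partial_\delta G_\delta)\|_{M^{\infty,1}_{0,0}} \lesssim \varepsilon^{-(0-(s-2))} \,\|\partial_\delta G_\delta\|_{M^{\infty,1}_{0,s-2}} = \varepsilon^{-(2-s)} \|\partial_\delta G_\delta\|_{M^{\infty,1}_{0,s-2}},
\end{equation*}
and passing to $\sup_{|t|<\delta_0}$ gives (iii).

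The only subtle point is justifying the interchange $\partial_\delta P_R G_\delta = P_R \partial_\delta G_\delta$. Because $P_R$ is defined through pairing with Schwartz functions supported in a compact frequency window, this identity is essentially a direct consequence of the weak differentiability assumption: testing $V_\varphi G_\delta$ against the Schwartz function $\chi_{\Omega_R}(z)\,\overline{V_\varphi[\pi(z')\varphi](z)}$ and differentiating in $\delta$ commutes the two operations. If any reader worries about this step, it can be bypassed entirely by repeating verbatim the two norm estimates from the proof of Lemma~\ref{lem:2}(i) with $\partial_\delta G_\delta$ in place of $g$; no new idea is needed. I therefore do not expect any genuine obstacle beyond routine bookkeeping.
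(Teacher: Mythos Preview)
Your approach is exactly the paper's: define $H_\delta$ as the frequency truncation $P_R G_\delta$ from the proof of Lemma~\ref{lem:2}, read off (i) and (ii) from Lemma~\ref{lem:2}(i) with $(a,b,c)=(0,s,2)$, then identify $\partial_\delta H_\delta = P_R(\partial_\delta G_\delta)$ and apply the second estimate of Lemma~\ref{lem:2}(i) with $(b,c)=(s-2,0)$ to get (iii). Two corrections are needed, one cosmetic and one substantive.

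\emph{Dimension.} Since $G_\delta$ lives on $\R^{2d}$, its STFT lives on $\R^{4d}$; the truncation set should be $\Omega_R=\R^{2d}\times B_{R,2d}(0)\subset\R^{4d}$, not $\R^d\times B_{R,d}(0)$.

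\emph{The interchange.} Your justification of $\partial_\delta(P_R G_\delta)=P_R(\partial_\delta G_\delta)$ does not work as written. The function $\chi_{\Omega_R}(z)\,\overline{V_\varphi[\pi(z')\varphi](z)}$ is \emph{not} Schwartz (the cutoff is discontinuous and $\Omega_R$ is unbounded in the first $2d$ variables), so you cannot invoke the weak-differentiability hypothesis directly against it. Your proposed ``bypass'' also fails: applying the norm estimates of Lemma~\ref{lem:2} to $\partial_\delta G_\delta$ bounds $P_R(\partial_\delta G_\delta)$, but (iii) asks for a bound on $\partial_\delta(P_R G_\delta)$; these coincide only \emph{after} the interchange is established, so nothing is bypassed. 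The paper fills this gap as follows: for each fixed $z$ one has $\partial_\delta V_\varphi G_\delta(z)=\langle \partial_\delta G_\delta,\pi(z)\varphi\rangle=V_\varphi(\partial_\delta G_\delta)(z)$ by weak differentiability (here $\pi(z)\varphi\in\mathcal{S}$, so the hypothesis applies), and then one exhibits an explicit integrable majorant on $\Omega_R$ (using $\|\partial_\delta G_\delta\|_{M^{\infty}_{0,s-2}}$ and the Gaussian decay of $\varphi$) to differentiate under the integral via the Leibniz rule. This is indeed routine, but it is the actual content of the step and cannot be skipped.
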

\begin{proof}
Define $H_\delta$ as in \eqref{eq:def-approx-h} with
$\Omega_R=\R^{2d}\times B_{R,2d}(0)$. Properties $(i)$ and $(ii)$
are then a  direct consequence of Lemma~\ref{lem:2} with
$a=0,b=s,c=2$. 

To prove (iii), we will  show that 
\begin{equation}\label{eq:dt-Ht}
\partial_\delta H_\delta=\int_{\R^{2d}} \chi_{\Omega_R}(z)V_\varphi (\partial_\delta G_\delta)(z)\pi(z)\varphi\ud z,
\end{equation}
i.e.,  integration and  the partial derivative   may be interchanged.

Once \eqref{eq:dt-Ht} is justified,  the approximation of
$\partial_\delta G_\delta$ is obtained as the partial derivative of $H_\delta
$ and  then  the third estimate (iii) follows from
(the proof of) Lemma~\ref{lem:2}.

To  do so, we find a majorant via the following inequalities: 
\begin{align*}
  |\chi_{\Omega_R}(z)\partial_\delta  V_\varphi  G_\delta(z)&\pi(z)\varphi(t)| = |\chi_{\Omega_R}(z)V_\varphi (\partial_\delta G_\delta) (z)\pi(z)\varphi(t)|
  \\
  &\leq  (1+R)^{2-s}\sup_{|t|<\delta_0}\| V_\varphi (\partial_t G_t) \|_{L^{\infty}_{0,s-2}}\cdot |\chi_{\Omega_R}(z)\varphi(t-x)|
    \\
    &=  (1+R)^{2-s}\sup_{|t|<\delta_0}\|  \partial_t G_t \|_{M^{\infty}_{0,s-2}}\cdot |\chi_{\Omega_R}(z)\varphi(t-x)|
    \\ &\lesssim (1+R)^{2-s}\sup_{|t|<\delta_0}\|  \partial_t G_t \|_{M^{\infty,1}_{0,s-2}}\cdot |\chi_{\Omega_R}(z)\varphi(t-x)|,
\end{align*}
where we applied the embedding $M^{\infty ,1}_{0,s-2}  \hookrightarrow M^\infty_{0,s-2} $ \cite[Theorem~12.2.2]{groe1} in the last step.
The final expression is an integrable majorant as
$$
\int_{\Omega_R}|\varphi(t-x)|\ud x \ud\omega\lesssim R^{2d}\|\varphi\|_{L^1}.
$$
Equation \eqref{eq:dt-Ht} therefore follows from an application of Leibniz integral rule.
\end{proof}

\begin{proof}[Proof of Theorem~\ref{lip}]
Let us choose $\varepsilon=|\delta_1-\delta_2|^\gamma$ for some
$\gamma>0$ to be determined. The combination of  Lemma~\ref{lem:basic-estimate} and Lemma~\ref{lem:approx-of-deriv}  then yields 
\begin{align*}
&|\sigma_\pm(T_{\delta_1})-\sigma_\pm(T_{\delta_2})|\lesssim (1-\delta_0)^{-(d+1)} |\delta_1-\delta_2|^{\gamma s} \sup_{|t|<\delta_0} \|G_t\|_{M^{\infty,1}_{0,s}}
\\
 &\hspace{1.3cm}+|\delta_1-\delta_2|^{1+\gamma(s-2)} (1-\delta_0)^{-(d+1)}\sup_{|t|<\delta_0} \big(\|G_t\|_{M^{\infty,1}_{0,s}}+\|\partial_tG_t\|_{M^{\infty,1}_{0,s-2}}\big)
 \\
& \hspace{0.25cm} \leq\hspace{-1pt} (1\hspace{-1pt}-\hspace{-1pt}\delta_0)^{-(d+1)} \big(|\delta_1\hspace{-1pt}-\hspace{-1pt}\delta_2|^{\gamma s} \hspace{-1pt} +\hspace{-1pt}|\delta_1\hspace{-1pt}-\hspace{-1pt}\delta_2|^{1+\gamma(s-2)} \big)\sup_{|t|<\delta_0} \big(\|G_t\|_{M^{\infty,1}_{0,s}}\hspace{-1pt}+\hspace{-1pt}\|\partial_tG_t\|_{M^{\infty,1}_{0,s-2}}\big).
\end{align*}
On  choosing $\gamma=1/2$, the contribution of both terms is of the
same order $|\delta _1 - \delta _2|^{s/2}$, and this  concludes the proof.
\end{proof}

\subsection{Gabor frame bounds}

Let $g,h\in M^1(\R^d)$ and $\Lambda\subset \R^{2d}$ be a relatively
separated set.  The mixed Gabor frame operator of the associated set of
time-frequency  shifts using $g$ as analysis and $h$ as synthesis window  is given by
\[S_{g,h,\Lambda}f = \sum _{\lambda \in \Lambda } \langle f, \pi
(\lambda )g\rangle \pi (\lambda )h, \qquad f \in L^2(\mathbb{R}^d).\]
If $g=h$, we write $S_{g,\Lambda}=S_{g,g,\Lambda}$ to denote the Gabor frame operator. 
  The Weyl symbol of the rank one operator $\langle\, \cdot\, ,\pi(z)g\rangle \pi(z)h$ is just the shift $T_z\mathcal{W}(h,g)$, where $\mathcal{W}(h,g)$ is the cross-Wigner distribution  introduced in \eqref{eq_wigner}. Hence, the Weyl symbol of $S_{g,h,\Lambda }$ is
$$
\sigma_{g,h,\Lambda}=\sum_{\lambda\in\Lambda} T_\lambda \mathcal{W}(h,g).
$$
As we already noted before, the extreme spectral values of $S_{g,\Lambda}$ and $S_{g,\alpha\Lambda}$ are
equal to the optimal frame bounds  of $\mathcal{G}(g,\Lambda)$ and
$\mathcal{G}(g,\alpha\Lambda)$ respectively.  The Weyl symbol
corresponding to $S_{g,h,\alpha\Lambda}$ with the dilated set 
$\alpha \Lambda $  is
$$
\sigma _{g,h,\alpha \Lambda }=\sum_{\lambda\in\Lambda} T_{\alpha
  \lambda} \mathcal{W}(h,g) = D_{1/\alpha}\Big( \sum _{\lambda
  \in \Lambda } T_\lambda  D_{\alpha}\mathcal{W}(h,g)\Big)\, .
$$
We now set $\sqrt{1+\delta}=1/\alpha$, then the Weyl symbol   of the
deformed frame operator $S_{g,h,\alpha \Lambda }$ is  $\sigma
_{g,h,\alpha \Lambda }  = D_{\sqrt{1+\delta}} G_\delta$
with
$$
G_\delta = \sum _{\lambda
  \in \Lambda }   T_\lambda D_{1/\sqrt{1+\delta}}\mathcal{W}(h,g) \, , 
$$
and is therefore of the required form \eqref{eq:c1}.

Let $\mu = \sum _{\lambda \in \Lambda } \delta _\lambda $. Then
$\|\mu\|_{ M^\infty}\lesssim \text{rel}(\Lambda)$ by
Lemma~\ref{lemma_mu} and $G_\delta = \mu \ast D_{1/\sqrt{1+\delta }}\W
  (h,g)$. 
Our proof of Theorem~\ref{thm:holder} consists of two steps. First, for $1\leq s\leq 2$
we apply Theorem~\ref{lip} which requires that we bound the $M^{\infty,1}_{0,s}$-norm of the symbol and the $M^{\infty,1}_{0,s-2}$-norm of its derivative with respect to the parameter. This is done in the lemma below. For $0<s<1$ we apply an approximation argument similar to the proof of Theorem~\ref{lip}.
\begin{lemma} \label{sjosymbol}
Let $\Lambda\subset \R^{2d}$ be relatively separated, $1\leq s\leq 2$, and $0<\delta_0<1$. If $g=h\in M^1_s(\rd )$, then
\begin{enumerate}
\item[(i)] $
\|  G_\delta\|_{M^{\infty,1}_{0,s}}\lesssim \emph{rel}(\Lambda)\cdot  (1-\delta_0)^{-d}\cdot\|g\|_{M^1_s}^2$,$\qquad  \delta \in (-\delta_0,\delta_0)$,   
\item[(ii)]   $\|\partial_\delta G_\delta\|_{M^{\infty,1}_{0,s-2}}\lesssim \emph{rel}(\Lambda)\cdot  (1-\delta_0)^{-(d+2)}\cdot\|g\|_{M^1_s}^2$, $\qquad  \delta \in (-\delta_0,\delta_0)$.
\end{enumerate}
\end{lemma}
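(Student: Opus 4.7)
The plan is to use the identity $G_\delta = \mu \ast D_{1/\sqrt{1+\delta}}\mathcal{W}(g,g)$ displayed in the excerpt (with $g=h$ and $\mu=\sum_{\lambda\in\Lambda}\delta_\lambda$) and to chain together four ingredients: the convolution estimate \eqref{eq:c7}, the dilation estimate \eqref{eq_cx} at $a(\delta):=1/\sqrt{1+\delta}$ on $\R^{2d}$, the Wigner bound Lemma~\ref{lem:aux-W(G)}, and the measure bound $\|\mu\|_{M^\infty}\lesssim\emph{rel}(\Lambda)$ from Lemma~\ref{lemma_mu}. A direct inspection of \eqref{eq_cx} shows that for $|\delta|<\delta_0<1$ all dilation constants that arise stay uniformly bounded in $d$ and in the weight exponents relevant below; the factors $(1-\delta_0)^{-d}$ and $(1-\delta_0)^{-(d+2)}$ appearing in the statement are therefore comfortable overestimates. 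With this in hand, part (i) follows immediately: \eqref{eq:c7} with exponent $s$ gives $\|G_\delta\|_{M^{\infty,1}_{0,s}}\lesssim \|\mu\|_{M^\infty}\,\|D_{a(\delta)}\mathcal{W}(g,g)\|_{M^1_{0,s}}$, then \eqref{eq_cx} strips the dilation, and Lemma~\ref{lem:aux-W(G)} with $(s',t')=(0,s)$ yields $\|\mathcal{W}(g,g)\|_{M^1_{0,s}}\lesssim\|g\|_{M^1_s}^2$.

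The key step for part (ii) is an explicit chain-rule identity for the dilation. Using $z_j\partial_jF(az)=a^{-1}D_a(X_j\partial_jF)(z)$ for each $j\in\{1,\ldots,2d\}$ together with $a'(\delta)/a(\delta)=-1/(2(1+\delta))$, the chain rule gives
\[
\partial_\delta\bigl[D_{a(\delta)}F\bigr] \;=\; -\frac{1}{2(1+\delta)}\,D_{a(\delta)}\Bigl[\,\sum_{j=1}^{2d} X_j\partial_j F\,\Bigr],
\]
and therefore $\partial_\delta G_\delta = -\tfrac{1}{2(1+\delta)}\,\mu\ast D_{a(\delta)}\bigl[\sum_{j} X_j\partial_j\mathcal{W}(g,g)\bigr]$. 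Applying \eqref{eq:c7} with exponent $s-2$, followed by \eqref{eq_cx}, reduces the estimate to $\sum_j\|X_j\partial_j\mathcal{W}(g,g)\|_{M^1_{0,s-2}}$. Now \eqref{eq:ju1} absorbs each $X_j\partial_j$ into a unit upward shift of both weight exponents, producing $\|\mathcal{W}(g,g)\|_{M^1_{1,s-1}}$; since $1\le s\le 2$, both exponents $(1,s-1)$ are nonnegative with sum $s$, so Lemma~\ref{lem:aux-W(G)} closes the estimate at $\|g\|_{M^1_s}^2$. The prefactor $1/(1+\delta)\le (1-\delta_0)^{-1}$ is the only genuine source of $\delta_0$-blow-up; combined with Lemma~\ref{lemma_mu}, this yields (ii).

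The point I expect to be most delicate is not any individual inequality but the justification that the formula just derived for $\partial_\delta G_\delta$ coincides with the weak derivative needed to feed into the hypotheses of Theorem~\ref{lip}. Since $\delta\mapsto a(\delta)$ is smooth, $\mathcal{W}(g,g)\in M^1_s(\R^{2d})\hookrightarrow L^1$, and each subsequent operation (dilation, convolution with $\mu$) is continuous in an appropriate modulation-space norm, this should amount to differentiation under the duality pairing against Schwartz test functions and then passing the convolution with $\mu$ through the tempered-distribution calculus. I would spell this step out once cleanly, and treat everything else as uniform modulation-space bookkeeping.
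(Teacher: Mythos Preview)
Your proposal is correct and follows essentially the same route as the paper's proof: the convolution estimate \eqref{eq:c7}, the dilation bound \eqref{eq_cx}, the Wigner estimate of Lemma~\ref{lem:aux-W(G)}, and for (ii) the chain-rule identity $\partial_\delta G_\delta = -\tfrac{1}{2(1+\delta)}\,\mu\ast D_{1/\sqrt{1+\delta}}\bigl[\sum_j X_j\partial_j\mathcal{W}(g)\bigr]$ combined with \eqref{eq:ju1} and the observation that $s-1\ge 0$ is exactly what makes Lemma~\ref{lem:aux-W(G)} applicable at the exponents $(1,s-1)$. Your remark that the weak-derivative justification deserves care is well taken; the paper simply computes $\partial_\delta G_\delta$ pointwise without further comment.
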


  \begin{proof}
 The
convolution relation~\eqref{eq:c7}, Lemma~\ref{lem:aux-W(G)} and the dilation
property~\eqref{eq_cx}  
--- with with $a=(1+\delta)^{-1/2} \geq (1-\delta_0)^{1/2}$ and dimension $2d$ ---
yield that 
\begin{align*}
  \|G_\delta\|_{M^{\infty ,1}_{0,s}} &=\| \mu\ast 
D_{1/\sqrt{1+\delta }} \W (g)\|_{M^{\infty ,1}_{0,s}} \lesssim \| \mu\|_{M^\infty}\|
D_{1/\sqrt{1+\delta }} \W (g)\|_{M^1_{0,s}} 
\\ &\lesssim \max \big\{ ({1\hspace{-.5pt}+\hspace{-.5pt}\delta} )^{d},   ({1\hspace{-.5pt}+\hspace{-.5pt}\delta} )^{-s/2} \big\}
\| \mu\|_{M^\infty}\|g\|_{M^1_s}^2   \lesssim \hspace{-.5pt}(1\hspace{-.5pt}-\hspace{-.5pt}\delta_0)^{-d}  \, \text{rel}(\Lambda)\, \|g\|_{M^1_s}^2.
\end{align*}
It remains to determine  $\partial_\delta G_\delta$ and estimate
its norm.
First, we note that
\begin{align*}
\partial_\delta G_\delta(z)&=\partial_{\delta}\left( \sum_{\lambda\in\Lambda}\mathcal{W}(g)\left(\frac{z-\lambda}{\sqrt{1+\delta}}\right)\right)
\\
&=  \sum_{\lambda\in\Lambda}\sum_{i=1}^{2d}-\frac{z_i-\lambda_i}{2(1+\delta)^{3/2}}\partial_i\mathcal{W}(g)\left(\frac{z-\lambda}{\sqrt{1+\delta}}\right)
\\
  &  =-\frac{1}{2(1+\delta) }\ \mu \ast
  D_{1/\sqrt{1+\delta}}\left(\sum_{i=1}^{2d}X_i\partial_i
  \mathcal{W}(g)  \right)(z)\, . 
\end{align*}
Using~\eqref{eq:c7} and~\eqref{eq_cx} as above, we prove (ii):  
\begin{align*}
\|\partial_\delta G_\delta \|_{M^{\infty,1}_{0,s-2}}&\lesssim (1-\delta_0)^{-1}
\cdot\|\mu\|_{M^\infty}\sum_{i=1}^{2d}\|D_{1/\sqrt{1+\delta}} X_i \partial_i \mathcal{W}(g)\|_{M^1_{0,s-2}}
\\
&\lesssim (1-\delta_0)^{-1-d+(s-2)/2}
\cdot \text{rel}(\Lambda) \cdot \|\mathcal{W}(g)\|_{M_{1,s-1}^1}
\\
&\lesssim (1-\delta_0)^{-(d+2)} \cdot 
\text{rel}(\Lambda)
\cdot
\|g\|_{M_s^1}^2\,  ,
\end{align*}
where  in the last step we used \eqref{eq:ju1} and Lemma~\ref{lem:aux-W(G)} which we were allowed to apply since $s-1\geq 0$.
\end{proof}

\begin{remark}
    We had to restrict to $1\leq s\leq 2$ in the previous Lemma because Lemma~\ref{lem:aux-W(G)} only guarantees  $\|\mathcal{W}(g)\|_{M_{1,s-1}^1}\lesssim  \|g\|^2_{M_{s}^1}$ if $s\geq 1$. This restriction can be relaxed with a more refined analysis, which we do not discuss here.
\end{remark}
 The next proposition proves one case of Theorem~\ref{thm:holder}. 

\begin{proposition}\label{prop:1<s<2}
  Let $\Lambda\subset \R^{2d}$ be relatively separated, $1\leq s\leq 2$, and $3/4<\alpha<2$. If $g\in M^1_s(\rd )$, then  
\begin{equation}     \left|\sigma_\pm(S_{g,\Lambda})-\sigma_\pm(S_{g,\alpha\Lambda})\right|\leq C_d\cdot \emph{rel}(\Lambda)  \cdot\|g\|_{M^1_s}^2 \cdot|1-\alpha|^{s/2}.
\end{equation}
\end{proposition}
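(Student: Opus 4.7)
The plan is to deduce Proposition \ref{prop:1<s<2} directly from Theorem \ref{lip} applied to the one-parameter family $\{T_\delta\}$ constructed in the paragraph preceding Lemma \ref{sjosymbol}, with the change of variable $\sqrt{1+\delta}=1/\alpha$. Under this parameterization the Weyl symbol of $S_{g,\alpha\Lambda}$ equals $D_{\sqrt{1+\delta}}G_\delta$ with
\[
G_\delta=\mu\ast D_{1/\sqrt{1+\delta}}\mathcal{W}(g), \qquad \mu=\sum_{\lambda\in\Lambda}\delta_\lambda,
\]
so that $T_0=S_{g,\Lambda}$ and $T_\delta=S_{g,\alpha\Lambda}$.

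First I will fix the parameter range. The restriction $3/4<\alpha<2$ corresponds to $\delta=(1-\alpha^2)/\alpha^2$ varying over a compact subinterval of $(-1,1)$, so we may choose a single $\delta_0<1$ that captures the entire range; this absorbs the factor $(1-\delta_0)^{-(d+1)}$ from Theorem \ref{lip} into a constant depending only on $d$. A direct computation shows $|\delta|=|1-\alpha|\,|1+\alpha|/\alpha^2\lesssim|1-\alpha|$ on this range, so any H\"older bound of order $|\delta|^{s/2}$ transfers to one of order $|1-\alpha|^{s/2}$ at the cost of a harmless universal constant.

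Next I will check that the hypotheses of Theorem \ref{lip} hold. Lemma \ref{sjosymbol}, parts (i) and (ii), provides the uniform estimates
\[
\sup_{|t|<\delta_0}\bigl(\|G_t\|_{M^{\infty,1}_{0,s}}+\|\partial_t G_t\|_{M^{\infty,1}_{0,s-2}}\bigr)\;\lesssim\;\text{rel}(\Lambda)\cdot\|g\|_{M^1_s}^2,
\]
valid precisely for $1\leq s\leq 2$. The pointwise formula
\[
\partial_\delta G_\delta=-\frac{1}{2(1+\delta)}\,\mu\ast D_{1/\sqrt{1+\delta}}\Big(\sum_{i=1}^{2d}X_i\partial_i\mathcal{W}(g)\Big),
\]
derived in the proof of Lemma \ref{sjosymbol}, together with dominated convergence against Schwartz test functions, shows that $\delta\mapsto G_\delta$ is weakly differentiable in the sense required by Theorem \ref{lip}, with the above as its weak derivative. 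Applying Theorem \ref{lip} with $\delta_1=0$ and $\delta_2=\delta$ therefore yields
\[
|\sigma_\pm(T_0)-\sigma_\pm(T_\delta)|\;\leq\;C_{d,s}\cdot|\delta|^{s/2}\cdot\text{rel}(\Lambda)\cdot\|g\|_{M^1_s}^2,
\]
and the change of variable $|\delta|\lesssim|1-\alpha|$ from the previous paragraph completes the proof.

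No single step is a real obstacle: the entire analytical load has been packed into Theorem \ref{lip} and Lemma \ref{sjosymbol}, and the proposition amounts to assembling these ingredients with a bookkeeping argument. The only mild subtlety is confirming weak differentiability of $\delta\mapsto G_\delta$, which is settled by the explicit pointwise derivative formula cited above.
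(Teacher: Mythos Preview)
Your proposal is correct and follows essentially the same approach as the paper: substitute $\sqrt{1+\delta}=1/\alpha$, invoke Lemma~\ref{sjosymbol} to verify the hypotheses of Theorem~\ref{lip}, apply that theorem at $\delta_1=0$, $\delta_2=\delta$, and convert $|\delta|^{s/2}$ back into $|1-\alpha|^{s/2}$ using the bounded range of $\alpha$. The paper makes the explicit choice $\delta_0=7/9$ (since $\alpha\in(3/4,2)$ gives $\delta\in(-3/4,7/9)$), but this is exactly the ``compact subinterval of $(-1,1)$'' you invoke.
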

\begin{proof}
We let $\sqrt{1+\delta } = 1/\alpha$. The restriction on the range of $\alpha$ implies that $|\delta|<\delta_0 := 7/9$, while frame operator $S_{g,\alpha \Lambda }$  has the Weyl symbol
$D_{\sqrt{1+\delta }} G_\delta $. In Lemma~\ref{sjosymbol} we have bounded the
  weighted $M^{\infty ,1}$-norm of $G_\delta $ by the $M^1_s$-norm of
  the window $g$. Therefore  Theorem~\ref{lip} yields that 
$$
     \left|\sigma_\pm(S_{g,\Lambda})-\sigma_\pm(S_{g,\alpha\Lambda})\right|\leq C_{d,s}\cdot \mathrm{rel}(\Lambda)  \cdot\|g\|_{M^1_s}^2 \cdot|\delta|^{s/2}.$$ 
Since $\delta
= \alpha ^{-2} -1$, we have 
     $$
     |\delta|^{s/2}=|\alpha^{-2}-1|^{s/2}=\alpha^{-s}(1+\alpha)^{s/2}|1-\alpha|^{s/2}\leq
     C \cdot |1-\alpha|^{s/2}, 
     $$
     as claimed. 
\end{proof}

We will need the following simple estimate for the operator norm of the Gabor frame operator.

\begin{lemma}\label{lem:norm-S}
    Let $\Lambda\subset\R^{2d}$ be relatively separated, and $3/4<\alpha<2$. If $g,h\in M^1(\R^d)$, then 
    $$
    \|S_{g,h,\alpha\Lambda}\|\lesssim \emph{rel}(\Lambda)\cdot \|g\|_{M^1}\cdot\|h\|_{M^1}.
    $$
\end{lemma}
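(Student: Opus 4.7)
The plan is to reduce the operator norm bound to the Sj\"ostrand symbol norm via Theorem~\ref{lem:bdd}, then use the same ``frame operator as convolution'' decomposition developed just before Lemma~\ref{sjosymbol}, only with all weights set to zero.

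More precisely, set $\sqrt{1+\delta}=1/\alpha$. The restriction $3/4<\alpha<2$ places $\delta$ in a compact subinterval of $(-1,\infty)$ (roughly $\delta\in(-3/4,7/9)$), so every factor of the form $\max\{1,(1+\delta)^{\pm d}\}$ that will appear can be absorbed into an absolute constant depending only on $d$. As in the discussion preceding Lemma~\ref{sjosymbol}, the Weyl symbol of $S_{g,h,\alpha\Lambda}$ is $D_{\sqrt{1+\delta}}G_\delta$ with
$$
G_\delta \;=\; \mu \,*\, D_{1/\sqrt{1+\delta}}\mathcal{W}(h,g), \qquad \mu=\sum_{\lambda\in\Lambda}\delta_\lambda.
$$
By Theorem~\ref{lem:bdd}(ii) applied to the pair $(g,h)$ (which also covers the non-self-adjoint case by polarization, or directly since the proof in \cite{grorosp23} only uses symbol boundedness), I get
$$
\|S_{g,h,\alpha\Lambda}\|_{B(L^2)} \;\lesssim\; \|G_\delta\|_{M^{\infty,1}}.
$$

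Next I would bound $\|G_\delta\|_{M^{\infty,1}}$ by chaining three ingredients already stated in the paper. First, the convolution estimate \eqref{eq:c7} with $s=0$ gives
$$
\|G_\delta\|_{M^{\infty,1}} \;\lesssim\; \|\mu\|_{M^\infty}\cdot\|D_{1/\sqrt{1+\delta}}\mathcal{W}(h,g)\|_{M^1}.
$$
Second, Lemma~\ref{lemma_mu} yields $\|\mu\|_{M^\infty}\lesssim \mathrm{rel}(\Lambda)$. Third, the dilation estimate \eqref{eq_cx} on $\R^{2d}$ with $s=0$ and $a=1/\sqrt{1+\delta}$, together with the bounded range of $\delta$, gives
$$
\|D_{1/\sqrt{1+\delta}}\mathcal{W}(h,g)\|_{M^1}\;\lesssim\;\|\mathcal{W}(h,g)\|_{M^1}.
$$
Finally, Lemma~\ref{lem:aux-W(G)} applied with $s=t=0$ furnishes $\|\mathcal{W}(h,g)\|_{M^1}\lesssim \|g\|_{M^1}\|h\|_{M^1}$. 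Combining the four inequalities proves the claim.

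There is no real obstacle here: the entire argument is a direct specialization of Lemma~\ref{sjosymbol} to the unweighted case, with the mild additional observation that the $\alpha$-range $(3/4,2)$ controls all dilation constants. The only point worth double-checking is that Theorem~\ref{lem:bdd}(ii) as quoted is formulated for $G_\delta$ real-valued, but its proof only uses $M^{\infty,1}$-boundedness of the Weyl calculus and the dilation estimate \eqref{eq_c}, so it applies verbatim to the cross symbol $G_\delta$ above. Should one wish to avoid invoking Theorem~\ref{lem:bdd}(ii) directly, one can simply combine Theorem~\ref{lem:bdd}(i) with the dilation estimate \eqref{eq_c} on $\R^{2d}$ to obtain the same bound $\|(D_{\sqrt{1+\delta}}G_\delta)^w\|\lesssim \|G_\delta\|_{M^{\infty,1}}$ up to a constant depending on $d$ alone.
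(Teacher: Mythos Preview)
Your proof is correct and follows essentially the same route as the paper: bound the operator norm by the $M^{\infty,1}$-norm of the Weyl symbol via Theorem~\ref{lem:bdd}, then chain the convolution estimate \eqref{eq:c7}, Lemma~\ref{lemma_mu}, the dilation bound \eqref{eq_cx}, and Lemma~\ref{lem:aux-W(G)} with all weight parameters set to zero. The only cosmetic difference is that the paper works directly in the $\alpha$-parametrization and uses Theorem~\ref{lem:bdd}(i) together with \eqref{eq_c} (your stated alternative) rather than invoking part (ii).
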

\begin{proof}
The convolution and dilation relations \eqref{eq:c7}, \eqref{eq_c}, \eqref{eq_cx},
Theorem~\ref{lem:bdd} and Lemma~\ref{lem:aux-W(G)} imply
\begin{align*}
    \|S_{g,h,\alpha\Lambda}\|&\lesssim \|D_{1/\alpha}G_\alpha\|_{M^{\infty,1}}\lesssim \|\mu\ast D_\alpha \mathcal{W}(h,g)\|_{M^{\infty,1}}\\
     &\lesssim  \|\mu\|_{M^\infty}\|D_\alpha \mathcal{W}(h,g)\|_{M^1} \lesssim \text{rel}(\Lambda)\|g\|_{M^1}\|h\|_{M^1},
\end{align*}
as claimed.
\end{proof}

To prove the remaining case of Theorem~\ref{thm:holder}, we use an
approximation argument similar to Lemma~\ref{lem:basic-estimate}.

\begin{lemma}\label{lem:approx-frame-op}
Let $g\in M^1(\R^d)$,$h\in M^1_1(\R^d)$, and $3/4<\alpha<2$. Then 
$$
|\sigma_\pm(S_{g,\Lambda})-\sigma_\pm(S_{g,\alpha\Lambda})|\lesssim \emph{rel} (\Lambda)\Big( \big(\|g\|_{M^1}+\|h\|_{M^1}\big)\|g-h\|_{M^1} +|1-\alpha|^{1/2}\|h\|_{M^1_1}^2\Big).
$$
\end{lemma}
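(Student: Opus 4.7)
The strategy is to mimic Lemma~\ref{lem:basic-estimate}: interpolate between $S_{g,\Lambda}$ and $S_{g,\alpha\Lambda}$ via the \emph{smoother} window $h\in M^1_1(\R^d)$, so that the only genuine deformation-in-$\alpha$ estimate is applied to $h$, where Proposition~\ref{prop:1<s<2} with $s=1$ is available. Concretely, I would split
\begin{align*}
|\sigma_\pm(S_{g,\Lambda})-\sigma_\pm(S_{g,\alpha\Lambda})|
&\leq |\sigma_\pm(S_{g,\Lambda})-\sigma_\pm(S_{h,\Lambda})|\\
&\quad+|\sigma_\pm(S_{h,\Lambda})-\sigma_\pm(S_{h,\alpha\Lambda})|\\
&\quad+|\sigma_\pm(S_{h,\alpha\Lambda})-\sigma_\pm(S_{g,\alpha\Lambda})|.
\end{align*}

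The middle term is the ``main'' term: since $h\in M^1_1(\R^d)$, Proposition~\ref{prop:1<s<2} applied with $s=1$ gives directly
$$
|\sigma_\pm(S_{h,\Lambda})-\sigma_\pm(S_{h,\alpha\Lambda})|\lesssim \text{rel}(\Lambda)\cdot\|h\|_{M^1_1}^2\cdot|1-\alpha|^{1/2},
$$
which accounts for the second summand in the claim.

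For the two outer terms I would pass from spectral extrema to operator norms via Lemma~\ref{lem:norm-bd-edges} and then telescope the \emph{bilinear} dependence of the frame operator on its window: writing
$$
S_{g,\Lambda}-S_{h,\Lambda}=S_{g,g-h,\Lambda}+S_{g-h,h,\Lambda},
$$
(as one checks by adding and subtracting $\sum_\lambda\langle\cdot,\pi(\lambda)g\rangle\pi(\lambda)h$), each of the two mixed frame operators is estimated in operator norm by Lemma~\ref{lem:norm-S}, giving
$$
\|S_{g,\Lambda}-S_{h,\Lambda}\|\lesssim \text{rel}(\Lambda)\bigl(\|g\|_{M^1}+\|h\|_{M^1}\bigr)\|g-h\|_{M^1}.
$$
The third term is handled identically, only with $\Lambda$ replaced by $\alpha\Lambda$; since $3/4<\alpha<2$ one has $\text{rel}(\alpha\Lambda)\lesssim \text{rel}(\Lambda)$ (the covering constant of $\alpha\Lambda$ by unit cubes is comparable to that of $\Lambda$ up to a factor depending only on $\alpha$ and $d$), so the same bound holds. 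Summing the three contributions yields the stated estimate.

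The only non-routine point is verifying that Proposition~\ref{prop:1<s<2} is legitimately available here (its hypothesis is exactly $h\in M^1_s$ with $1\leq s\leq 2$, so $s=1$ is borderline but allowed), and keeping track that all implicit constants absorbed in $\lesssim$ depend only on $d$ and on the fixed range $3/4<\alpha<2$. No obstacle beyond bookkeeping is expected.
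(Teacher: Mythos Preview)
Your proposal is correct and follows essentially the same route as the paper: the same three-term split via the triangle inequality, Proposition~\ref{prop:1<s<2} with $s=1$ for the middle term, and the bilinear telescoping $S_{g,\cdot}-S_{h,\cdot}=S_{g,g-h,\cdot}+S_{g-h,h,\cdot}$ together with Lemma~\ref{lem:norm-bd-edges} and Lemma~\ref{lem:norm-S} for the outer terms. The only cosmetic difference is that the paper applies Lemma~\ref{lem:norm-S} directly with the dilation parameter $\alpha$ (so $\text{rel}(\Lambda)$ appears without passing through $\text{rel}(\alpha\Lambda)$), whereas you reduce to $\alpha=1$ and then invoke $\text{rel}(\alpha\Lambda)\lesssim\text{rel}(\Lambda)$; both are fine.
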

\begin{proof}
Using the triangle inequality and Lemma~\ref{lem:norm-bd-edges} we
infer that 
\begin{align*}
    |\sigma&_\pm(S_{g,\Lambda})-\sigma_\pm(S_{g,\alpha\Lambda})|
    \\
    &\leq
      |\sigma_\pm(S_{g,\Lambda})-\sigma_\pm(S_{h,\Lambda})|+|\sigma_\pm(S_{h,\Lambda})-\sigma_\pm(S_{h,\alpha\Lambda})|
      + |\sigma_\pm(S_{h,\alpha\Lambda})-\sigma_\pm(S_{g,\alpha\Lambda})|
    \\
    &\lesssim
      \|S_{g,\Lambda}-S_{h,\Lambda}\|+|\sigma_\pm(S_{h,\Lambda})-\sigma_\pm(S_{h,\alpha\Lambda})|+
      \|S_{h,\alpha\Lambda}-S_{g,\alpha\Lambda}\| \, .
\end{align*}
Since $h\in M^1_1(\rd )$, we can apply Proposition~\ref{prop:1<s<2} to
the second term and  obtain
$$
|\sigma_\pm(S_{h,\Lambda})-\sigma_\pm(S_{h,\alpha\Lambda})| \lesssim
\text{rel}(\Lambda)|1-\alpha|^{1/2}\|h\|_{M^1_1}^2 \, .
$$
For the estimates of the first and third term, we use
Lemma~\ref{lem:norm-S} and estimate
\begin{align*}
 \|S_{g,\alpha\Lambda}-S_{h,\alpha\Lambda}\| &=  \|S_{g,(g-h),\alpha\Lambda}+S_{(g-h),h,\alpha\Lambda}\|
    \\
    &\leq\|S_{g,(g-h),\alpha\Lambda}\|+\|S_{(g-h),h,\alpha\Lambda}\|
    \\ 
    &\lesssim
      \text{rel}(\Lambda)\Big(\big(\|g\|_{M^1}+\|h\|_{M^1}\big)\|g-h\|_{M^1}\, .
\end{align*}
The combination of both estimates yields the claimed estimate.
\end{proof}
We have now collected all necessary ingredients to prove our main two results on Gabor frame bounds.
\begin{proof}[Proof of Theorem~\ref{thm:cont}]
Clearly, we may assume that $g \not= 0$.
Let $\varepsilon>0$. As $M^1_1(\R^d)$ is dense in $M^1(\R^d)$, we may
choose $h=h_\varepsilon\in M^1_1(\R^d)$ such that $0<\|h\|_{M^1} \leq
2\|g\|_{M^1}$ and 
$$
\text{rel}(\Lambda)\big(\|g\|_{M^1}+\|h\|_{M^1}\big)\|g-h\|_{M^1}\leq \varepsilon.
$$
Subsequently, choose \[\delta^{1/2} =\min\big\{\tfrac12,\big(\text{rel}(\Lambda)\|h\|_{M^1_1}^2\big)^{-1}\, \varepsilon \big\}.\] For $|1-\alpha| < \delta$ we therefore obtain 
$\alpha \in (3/4,5/4)\subset(3/4,2)$ and
\[ \text{rel}(\Lambda)\cdot\|h\|_{M^1_1}^2 \cdot|1-\alpha|^{1/2}\leq \varepsilon.\] This, together with Lemma~\ref{lem:approx-frame-op}, implies 
$$
|\sigma_\pm(S_{g,\Lambda})-\sigma_\pm(S_{g,\alpha\Lambda})|\leq C \varepsilon, \quad \text{for }|1-\alpha| < \delta,
$$
for a constant $C$ that is independent of $g,h,\alpha$, and $\Lambda $.
\end{proof}
 
\begin{proof}[Proof of Theorem~\ref{thm:holder}]
For $1\leq s\leq 2$ the result is proven in Proposition~\ref{prop:1<s<2},
while for $s > 2$ the claim follows from the case $s=2$ (as $\|g\|_{M^1_s} \geq \|g\|_{M^1_2}$).
Let therefore $0<s<1$ and choose $\varepsilon=|1-\alpha|^\eta$, for some $\eta>0$ to be specified. A combination of Lemma~\ref{lem:approx-frame-op} and Lemma~\ref{lem:2}  shows that
$$
\big|\sigma_\pm(S_{g,\Lambda})-\sigma_\pm(S_{g,\alpha\Lambda})\big|\lesssim \text{rel}(\Lambda)\,  \big(|1-\alpha|^{s\eta}+|1-\alpha|^{1/2-2(2-2s)\eta}\big)\, \|g\|_{M^1_s}^2.
$$
We choose $\eta$ such that  the two exponents are equal, i.e., such that $$s\eta=1/2-2(2-2s)\eta.$$ Solving for $\eta$ yields $\eta=\tfrac{1}{2(4-3s)}$.
Consequently,
$$
\big|\sigma_\pm(S_{g,\Lambda})-\sigma_\pm(S_{g,\alpha\Lambda})\big|\lesssim \text{rel}(\Lambda)\, |1-\alpha|^{\tfrac{s}{2(4-3s)}}\, \|g\|_{M^1_s}^2,
$$
which concludes the proof.
\end{proof}

\end{document}